\title{Stability of Sampling for CUR Decompositions}
\newtheorem{theorem}{Theorem}[section]
\newtheorem{proposition}[theorem]{Proposition}
\newtheorem{lemma}[theorem]{Lemma}
\newtheorem{corollary}[theorem]{Corollary}
\newtheorem{thmnum}{Theorem}
\newtheorem{cornum}{Corollary}
\theoremstyle{definition}
\newtheorem{experiment}{Experiment}
\newtheorem{remark}[theorem]{Remark}
\newcommand{\R}{\mathbb{R}}
\newcommand{\N}{\mathbb{N}}
\newcommand{\K}{\mathbb{K}}
\newcommand{\E}{\mathbb{E}}
\newcommand{\Prob}{\mathbb{P}}
\newcommand{\rank}{{\rm rank\,}}
\newcommand{\eps}{\varepsilon}
\begin{document}

\author{Keaton Hamm}
\address{Department of Mathematics, University of Arizona, Tucson, AZ 85719 USA}
\email{hamm@math.arizona.edu}

\author{Longxiu Huang}
\address{Department of Mathematics, University of California, Los Angeles, CA 90095 USA}
\email{huangl3@math.ucla.edu}

\keywords{CUR Decomposition, Low Rank Matrix Approximation, Dimensionality Reduction, Column Subset Selection, Randomized Sampling}
\subjclass[2010]{15A23,65F30,68W20}


\begin{abstract}


This article studies how to form CUR decompositions of low-rank matrices via primarily random sampling, though deterministic methods due to previous works are illustrated as well.  The primary problem is to determine when a column submatrix of a rank $k$ matrix also has rank $k$.  For random column sampling schemes, there is typically a tradeoff between the number of columns needed to be chosen and the complexity of determining the sampling probabilities.  We discuss several sampling methods and their complexities as well as stability of the method under perturbations of both the probabilities and the underlying matrix.  As an application, we give a high probability guarantee of the exact solution of the Subspace Clustering Problem via CUR decompositions when columns are sampled according to their Euclidean lengths.

\end{abstract}

\maketitle


\section{Introduction}


Low-rank matrices have taken on an important role in recent years both in theory and in applications as it has been observed that data matrices arising in diverse applications are very well-approximated by low-rank matrices \cite{UdellTownsend2019LowRank}.  Furthermore, matrix factorization methods based on low-rank structure have been used to great effect to solve linear systems, ce{}ompress data, speed up computations, and elucidate structure of matrices.  In fact, matrix factorizations appear twice in the list of top ten algorithms of the twentieth century \cite{dongarra2000guest}.  As theory and application of Machine Learning advances at an exponential rate, explorations of low-rank methods do as well on account of their success and fundamental importance.  

When designing a low-rank approximation method for practice, there are several factors that one may consider, including storage cost, computational complexity, and interpretability.  The latter is the subject of \cite{DMPNAS}, in which the authors propose the use of the CUR decomposition for matrices as a way to perform dimensionality reduction on a given set of data while maintaining interpretability of the results.  That is, using the Singular Value Decomposition (SVD) as is done in Principal Component Analysis (PCA) can lead to a representation of the data in terms of an abstract basis, and hence the resulting representation may lose interpretability (e.g., what is an eigenpatient in a medical trial).  These methods are useful in many tasks including prominent clustering algorithms like Spectral Clustering, but are not always suitable for this reason.  An alternative is to try to use the \textit{self-expressive} property exhibited by many datasets and attempt to use actual columns of the data as a dictionary in which to represent it.  This task, called \textit{column selection}, may be thought of as in-data feature selection which attempts to find the most representative data points to capture the salient features of the data.  Of course one may perform row selection as well, in which case an approximation of the form $A\approx CUR$ can be made in which the matrices $C$ and $R$ are column and row submatrices of $A$ itself; approximations of this form are called CUR approximations ($U$ is chosen in various ways which will be discussed in the sequel).

There are both deterministic and random methods for forming CUR approximations, each of which have advantages and drawbacks.  For large matrices, random sampling is typically less computationally expensive, but requires that more columns and rows be selected to guarantee good performance.  On the other hand, better theoretical guarantees may be given for certain deterministic column selection procedures.  Many works on column and row selection use CUR approximations as a fast way to approximate the truncated SVD, e.g., \cite{BoutsidisOptimalCUR,DKMIII,DM05, DMM08}.  However, some of the proposed algorithms in the literature perform well asymptotically, but do not guarantee recovery of actual low-rank matrices (for instance, that of \cite{DKMIII}).  In this article, we consider two main items: first, when do deterministic or random sampling procedures give rise to exact CUR decompositions for low-rank matrices (i.e., $A=CUR$), and second, are such methods stable under perturbations of either the underlying matrix or the sampling probabilities in the random case. As a sample application of our analysis, we illustrate how one can use our sampling guarantees to solve the Subspace Clustering Problem via some known matrix factorization methods.

\section{Main Results}

We consider the problem of how to select columns and rows to obtain an exact CUR decomposition of a low rank matrix, and prove that this procedure is stable under small perturbations.  Note that if $C$ and $R$ are column and row submatrices of $A$ which has low-rank, and $U$ is the matrix formed from entries where $C$ and $R$ overlap -- i.e., if $C=A(:,J)$ and $R=A(I,:)$ then $U:=A(I,J)$ -- then the classical statement of the CUR decomposition is that $A=CU^\dagger R$ if and only if $\rank(U)=\rank(A)$.  This exact decomposition goes back at least as far as the 1950s \cite{Penrose56} in the case that $U$ is square and invertible (this case also follows from rank additivity for Schur decompositions \cite{guttman1946enlargement}); for a history, the reader is invited to consult \cite{HammHuangACHA}, but the main theorem therein which characterizes this exact decomposition is restated in Section \ref{SEC:Background}. Our initial sampling result is obtained from some established results of Rudelson and Vershynin \cite{Rudelson_2007}.  Here we state simplified versions of the results to indicate their flavor to the reader, and reference the full statement that appears later.
\begin{thmnum}[Theorem \ref{THM:ColRowChnM}]\label{THM:IntroB}
If $A$ has rank $k$, then sampling $O(k\log k)$ columns and rows of $A$ independently with replacement according to column and row lengths, respectively, implies that $A = CU^\dagger R$ with high probability.
\end{thmnum}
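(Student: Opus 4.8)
The plan is to reduce the claim to two independent one-sided rank-preservation statements and then invoke the classical CUR characterization recalled above. By that characterization, $A = CU^\dagger R$ holds precisely when $\rank(U) = \rank(A) = k$, where $U = A(I,J)$ is the intersection submatrix on the sampled row index set $I$ and column index set $J$. Since $U$ is simultaneously a row submatrix of $C = A(:,J)$ and a column submatrix of $R = A(I,:)$, one always has $\rank(U) \le \min\{\rank(C),\rank(R)\} \le k$, so the content is the reverse inequality. Here I would exploit the rank-$k$ structure: fix a rank factorization $A = WH$ with $W \in \R^{m\times k}$ and $H \in \R^{k\times n}$ both of rank $k$, so that $C = W\,H(:,J)$, $R = W(I,:)\,H$, and $U = W(I,:)\,H(:,J)$. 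Because $W$ is injective and $H$ is surjective, $\rank(C) = \rank(H(:,J))$ and $\rank(R) = \rank(W(I,:))$, and when both of these equal $k$ the matrix $U = W(I,:)H(:,J)$ --- a product of an injective $|I|\times k$ matrix with a surjective $k\times|J|$ matrix --- has rank $k$. (This reduction is immediate from the rank factorization and is of a piece with the background facts of Section~\ref{SEC:Background}.) Thus it suffices to show that $\rank(C) = k$ and $\rank(R) = k$ each hold with high probability, after which a union bound finishes the proof.

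For the column side, let $p_j = \norm{a_j}^2 / \norm{A}_F^2$ and let $\wi C$ be the $m\times q$ matrix formed from the $q = O(k\log k)$ sampled columns, each rescaled by $1/\sqrt{q\,p_j}$; rescaling by nonzero scalars --- and any repetitions coming from with-replacement sampling --- leaves the rank unchanged, so $\rank(C) = \rank(\wi C)$. By design $\E[\wi C\wi C^T] = AA^T$, and --- this is the feature of length sampling that makes the argument clean --- each rank-one summand $\tfrac1{q p_j}a_j a_j^T$ has operator norm exactly $\norm{A}_F^2/q$. The Rudelson--Vershynin estimate then bounds $\norm{\wi C\wi C^T - AA^T}$ with high probability; choosing $q$ of order $k\log k$ --- with the suppressed constant controlled by the spectral ratio $\sigma_1(A)/\sigma_k(A)$, as made precise in Theorem~\ref{THM:ColRowChnM} --- makes this deviation strictly smaller than $\sigma_k(A)^2$. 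Weyl's inequality then gives $\lambda_k(\wi C\wi C^T) \ge \sigma_k(A)^2 - \norm{\wi C\wi C^T - AA^T} > 0$, so $\rank(\wi C) \ge k$ and hence $\rank(C) = k$. The row side is the transpose of this argument applied to $A^T$: sampling rows proportional to squared row lengths and forming the analogous $\wi R$, one obtains $\norm{\wi R^T\wi R - A^TA} < \sigma_k(A)^2$ with high probability once $O(k\log k)$ rows are taken, hence $\rank(R) = k$.

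Putting the pieces together, with probability at least $1$ minus the two failure probabilities we have $\rank(C) = \rank(R) = k$, therefore $\rank(U) = k$ by the reduction above, and therefore $A = CU^\dagger R$. I expect the only genuine obstacle to be the quantitative heart of Theorem~\ref{THM:ColRowChnM}: extracting from the Rudelson--Vershynin inequality an explicit relationship between the number of samples $q$, the rank $k$, the target failure probability, and $\sigma_1(A)/\sigma_k(A)$, and verifying that $q$ of order $k\log k$ really does push the perturbation below the $\sigma_k(A)^2$ threshold needed to protect the $k$-th singular value. The remaining ingredients --- the rank-factorization reduction, Weyl's inequality, and the union bound --- are routine.
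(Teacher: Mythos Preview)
Your proposal is correct and follows the same approach as the paper: apply the Rudelson--Vershynin concentration inequality to the rescaled sampled row (respectively column) matrix to drive $\|A^*A-\hat R^*\hat R\|_2$ below $\sigma_k(A)^2$, conclude $\rank(C)=\rank(R)=k$ via the resulting singular-value perturbation, and then invoke the CUR characterization. The paper streamlines two of your steps --- it cites the equivalence \eqref{COND:Ranks}$\Leftrightarrow$\eqref{COND:CUR} of Theorem~\ref{THM:CUR} directly rather than rederiving $\rank(U)=k$ through a rank factorization $A=WH$, and it multiplies the two success probabilities (row and column sampling being independent) rather than union-bounding --- but these differences are purely cosmetic.
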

Our method of proving Theorem \ref{THM:IntroB} allows for low sampling complexity (note that at least $k$ rows and columns must be sampled to achieve a valid CUR decomposition, so we incur only an extra $\log k$ factor) and also allows columns and rows to be sampled independently of each other.  Moreover, our proof technique allows us to demonstrate stability of this sampling method in the following sense.

\begin{thmnum}[Theorem \ref{THM:StableCUR}]\label{THM:IntroC}
If $A$ has rank $k$, and $p_i,q_i$ are probability distributions determined by the column and row lengths of $A$, respectively, then for any probability distributions which satisfy $\widetilde{p}_i\geq\alpha_ip_i$, $\widetilde{q}_i\geq\beta_iq_i$ for some $\alpha_i,\beta_i>0$, sampling $O(k\log k)$ columns and rows of $A$ independently with replacement according to $\widetilde{p}$ and $\widetilde{q}$, respectively, implies that $A=CU^\dagger R$ with high probability.
\end{thmnum}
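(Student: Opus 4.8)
The plan is to deduce the exact decomposition from the classical characterization together with a pair of rank conditions, and then to establish those rank conditions with high probability by re-running the proof of Theorem~\ref{THM:ColRowChnM} with the relevant coherence-type parameter inflated by $1/\min_i\alpha_i$ (respectively $1/\min_i\beta_i$); this inflation is exactly where the perturbation hypothesis enters.

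First I would reduce to the ranks of $C$ and $R$. By the characterization of the exact CUR decomposition recalled in Section~\ref{SEC:Background}, $A = CU^\dagger R$ if and only if $\rank(U) = \rank(A) = k$, so it suffices to show that with high probability $\rank(C) = k$ and $\rank(R) = k$: indeed $\rank(C) = k$ makes the column space of $C$ equal that of $A$, hence $A = CC^\dagger A$, and likewise $A = AR^\dagger R$, so restricting $A = CC^\dagger A R^\dagger R$ to the sampled row index set $I$ gives $R = U\,C^\dagger A R^\dagger R$ and therefore $k = \rank(R) \le \rank(U) \le \rank(A) = k$. Since the hypotheses on $\widetilde q$ relative to the row-length distribution of $A$ are the transpose of those on $\widetilde p$, it is enough to handle the columns and then apply the identical statement to $A^\top$, paying only a union bound.

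Next I would linearize the event $\{\rank(C)=k\}$. Writing a thin SVD $A = U_k\Sigma_k V_k^\top$ with $\Sigma_k\in\R^{k\times k}$ invertible, and letting $S\in\R^{n\times c}$ be the rescaled sampling matrix whose $t$-th column is $(c\,\widetilde p_{i_t})^{-1/2}e_{i_t}$ for the sampled indices $i_1,\dots,i_c$, the columns of $C$ agree up to nonzero scalars with those of $AS = U_k\Sigma_k(V_k^\top S)$; since $U_k\Sigma_k$ has full column rank, $\rank(C) = \rank(V_k^\top S)$, so it suffices to force $\norm{V_k^\top S S^\top V_k - I_k}_2 < 1$. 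Writing $V_k^\top S S^\top V_k = \tfrac1c\sum_{t=1}^c y_ty_t^\top$ with $y_t = \widetilde p_{i_t}^{-1/2}V_k^\top e_{i_t}$ i.i.d.\ and $\E[y_ty_t^\top] = V_k^\top V_k = I_k$, this is precisely the setting of the Rudelson--Vershynin sampling estimate underlying Theorem~\ref{THM:ColRowChnM}, whose sample complexity is governed solely by $M^2 := \max_t\norm{y_t}_2^2 = \max_j \norm{V_k^\top e_j}_2^2/\widetilde p_j$.

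Finally I would feed in the perturbation bound. Length sampling uses $p_j = \norm{A(:,j)}_2^2/\norm{A}_F^2 = \norm{\Sigma_k V_k^\top e_j}_2^2/\norm{A}_F^2 \ge \sigma_k(A)^2\norm{V_k^\top e_j}_2^2/\norm{A}_F^2$, so $\norm{V_k^\top e_j}_2^2/p_j \le \norm{A}_F^2/\sigma_k(A)^2$ for every $j$; combined with $\widetilde p_j \ge \alpha_j p_j \ge \alpha\,p_j$ where $\alpha := \min_i\alpha_i > 0$ (a positive minimum, since there are only finitely many columns), this gives the uniform bound $M^2 \le \norm{A}_F^2/(\alpha\,\sigma_k(A)^2)$. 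Taking $c$ a sufficiently large constant multiple of $M^2\log M^2$ --- which is $O(k\log k)$ in the regime of the theorem once $\alpha$ is treated as a fixed constant --- the Rudelson--Vershynin bound $\E\norm{\tfrac1c\sum_t y_ty_t^\top - I_k}_2 \le C_0\, M\sqrt{(\log c)/c}$, valid once its right-hand side is $\le 1$, together with Markov's inequality (or the Chernoff/Bernstein tail used for Theorem~\ref{THM:ColRowChnM}), makes $\norm{V_k^\top S S^\top V_k - I_k}_2 < 1$ with high probability, so $\rank(C) = k$; the analogous run on $A^\top$ with $\widetilde q$ and $\beta := \min_i\beta_i$ gives $\rank(R) = k$, and Step~1 concludes. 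The main obstacle is exactly this last step: recognizing that the hypothesis $\widetilde p_j \ge \alpha_j p_j$ is precisely what keeps the effective coherence $\max_j\norm{V_k^\top e_j}_2^2/\widetilde p_j$ finite and bounded by $\norm{A}_F^2/(\alpha\sigma_k(A)^2)$, and tracking honestly how $\alpha$ enters the column count --- a dependence hidden inside the informal ``$O(k\log k)$'' but which is the real content of stability --- while applying the Rudelson--Vershynin estimate only in its valid regime, which is what pins down the $\log$ factor and the final sample size.
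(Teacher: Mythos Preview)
Your argument is correct, but it takes a somewhat different linearization than the paper's own proof. The paper proves Theorem~\ref{THM:StableCUR} by applying the Rudelson--Vershynin estimate (Theorem~\ref{THM:RVLargeNumbers}) directly to rescaled \emph{rows of $A$}: defining $y=\widetilde q_i^{-1/2}A(i,:)$, one has $\E(y\otimes y)=A^*A$ and $\|y\|_2\le \sqrt{r}/\alpha$ (with $r$ the stable rank and $\alpha=\min_i\alpha_i$), which yields $\|A^*A-\hat R^*\hat R\|_2\le \tfrac{\eps^2}{2}\|A\|_2^2$ with high probability; the extra hypothesis $\eps<\kappa(A)^{-1}$ then forces this below $\sigma_k(A)^2$ and gives $\rank(R)=k$. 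You instead pass through the SVD and apply Rudelson--Vershynin to rescaled \emph{rows of $V_k$}, aiming at $\|V_k^\top SS^\top V_k-I_k\|_2<1$ directly; the bound $\|V_k^\top e_j\|_2^2/p_j\le \|A\|_F^2/\sigma_k(A)^2$ you derive is exactly the content of Lemma~\ref{LEM:ColLev2}, so your route is in spirit the leverage-score analysis of Section~\ref{SEC:LevStability} rather than that of Section~\ref{SEC:ProofStability}. What this buys you is a cleaner $\kappa(A)$ dependence (your $M^2\le r\kappa(A)^2/\alpha$ matches Corollary~\ref{COR:ColLev}, versus the $\kappa(A)^4$ implicit in Remark~\ref{REM:SamplingOrders}); what it costs is an explicit invocation of the SVD in the proof, which the paper's more direct argument avoids. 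Both approaches hide the same condition-number dependence inside the informal ``$O(k\log k)$'', as you acknowledge in your closing remark.
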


As a corollary, we find that uniform sampling of rows and columns yields an exact CUR decomposition with high probability; this result is new: the only previous results for uniform sampling were given by Chiu and Demanet under coherence assumptions on the columns of $A$ \cite{DemanetWu}.  
Additionally, we may combine Theorems \ref{THM:IntroB} and \ref{THM:IntroC}: suppose that $\widetilde{A} = A+E$ where $A$ has rank $k$, and we sample columns and rows of $\widetilde{A}$ to form $\widetilde{C},\widetilde{R},$ and $\widetilde{U}$.  These may be written as $\widetilde{C} = C+E(:,J)$, for instance, where $C,R,$ and $U$ are the corresponding column, row, and intersection submatrices of the low rank matrix $A$.  It is natural to ask what the likelihood is that sampling from the noisy version of $A$ yields a CUR decomposition of $A$ itself. 

\begin{cornum}[Corollary \ref{COR:Uniform}]\label{COR:IntroA}
Suppose that $\widetilde{A}=A+E$, with $A$ having rank $k$.  Suppose also that no column or row of $\widetilde{A}$ is zero when the corresponding column or row of $A$ is nonzero.  Then sampling $O(k\log k)$ columns and rows of $\widetilde{A}$ uniformly with replacement yields $\widetilde{C},\widetilde{U},\widetilde{R}$ such that $A=CU^\dagger R$ with high probability.
\end{cornum}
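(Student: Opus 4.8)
The plan is to obtain Corollary \ref{COR:IntroA} as a direct consequence of the stability result Theorem \ref{THM:StableCUR} (stated above as Theorem \ref{THM:IntroC}), combined with the classical characterization $A = CU^\dagger R \iff \rank(U) = \rank(A)$ recalled in Section \ref{SEC:Background} (see also \cite{HammHuangACHA}). The observation driving the proof is that \emph{uniform} sampling of the columns (resp.\ rows) of $\widetilde{A} = A + E$ is, for the purposes of Theorem \ref{THM:StableCUR}, an admissible perturbed distribution relative to the length-based distribution of the clean matrix $A$, and the nonzero-column/row hypothesis is precisely the condition that secures this admissibility.

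First I would fix notation: let $p$ and $q$ denote the column- and row-length distributions of $A$ appearing in Theorems \ref{THM:IntroB}--\ref{THM:IntroC}, so that $p_j > 0$ exactly when $A(:,j) \neq 0$ and $q_i > 0$ exactly when $A(i,:) \neq 0$, and let $\widetilde{p}, \widetilde{q}$ be the uniform distributions over the nonzero columns and rows of $\widetilde{A}$, so that $\widetilde{p}_j = 1/\widetilde{n}$ for each of the $\widetilde{n}$ nonzero columns of $\widetilde{A}$. I would then check the hypothesis of Theorem \ref{THM:StableCUR} coordinatewise. If $A(:,j) = 0$ then $p_j = 0$, and $\widetilde{p}_j \geq \alpha_j p_j$ holds for any $\alpha_j > 0$. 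If $A(:,j) \neq 0$, the hypothesis forces $\widetilde{A}(:,j) \neq 0$, so $j$ lies in the support of $\widetilde{p}$ and $\widetilde{p}_j = 1/\widetilde{n} > 0$; taking $\alpha_j := \widetilde{p}_j / p_j > 0$ then makes $\widetilde{p}_j \geq \alpha_j p_j$ hold with equality. The identical computation for rows produces positive weights $\beta_i$ with $\widetilde{q}_i \geq \beta_i q_i$. Theorem \ref{THM:StableCUR} now applies verbatim: sampling $O(k\log k)$ columns and rows of $\widetilde{A}$ independently and uniformly produces index sets $J$ and $I$ for which, with $C = A(:,J)$, $U = A(I,J)$, $R = A(I,:)$, one has $\rank(U) = k$, and hence $A = CU^\dagger R$ with high probability. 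The factors actually computed from the data are $\widetilde{C} = C + E(:,J)$, $\widetilde{U} = U + E(I,J)$, $\widetilde{R} = R + E(I,:)$, but the conclusion is a statement about the clean factors $C, U, R$.

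Since the corollary reduces in a few lines to Theorem \ref{THM:StableCUR}, the substance of the argument is already in that theorem; within the reduction itself the only delicate point is the bookkeeping around zero columns and rows. A perturbation $E$ could in principle annihilate a nonzero column of $A$ (for instance $E(:,j) = -A(:,j)$), and then uniform sampling of $\widetilde{A}$ would never select column $j$, so the support of $\widetilde{p}$ would fail to contain the support of $p$ and Theorem \ref{THM:StableCUR} could not be invoked; the hypothesis that no column or row of $\widetilde{A}$ vanishes where the corresponding column or row of $A$ does not is exactly what removes this possibility. I would also emphasize that the implied constant in $O(k\log k)$ is not universal --- it is inherited from Theorem \ref{THM:StableCUR} and depends on the weights $\alpha_i, \beta_i$, equivalently on the spread of the column and row lengths of $A$ and on how many spurious nonzero columns and rows $E$ introduces into $\widetilde{A}$ --- so the sampling complexity degrades as $\min_i \alpha_i$ or $\min_i \beta_i$ decreases. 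Finally, specializing to $E = 0$ recovers the uniform-sampling guarantee for exactly rank-$k$ matrices, to be contrasted with the coherence-dependent result of \cite{DemanetWu}.
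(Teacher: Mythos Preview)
Your reduction to Theorem \ref{THM:StableCUR} is correct and is exactly how the paper proceeds: the detailed statement (Corollary \ref{COR:Uniform}) simply verifies that $\widetilde{p}_j = 1/n \geq \alpha_j^2\, p_j^{\textnormal{col}}$ with $\alpha_j^2 = \|A\|_F^2/(n\|A(:,j)\|_2^2)$ (and analogously for rows), then invokes Theorem \ref{THM:StableCUR}. The only cosmetic difference is that the paper takes the uniform law over \emph{all} $n$ column indices rather than over the nonzero columns of $\widetilde{A}$; with that choice the zero-column hypothesis is in fact superfluous (uniform index sampling does not see the matrix entries at all), and the noisy introductory version follows immediately from the clean one.
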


Something more general than Corollary \ref{COR:IntroA} may be said: indeed if the noise is small compared to the matrix $A$, then sampling $\widetilde{A}$ according to its row and column lengths yields the same conclusion that $A=CU^\dagger R$ above.  This allows one to conclude that the error of $\widetilde{A}-\widetilde{C}\widetilde{U}^\dagger\widetilde{R}$ is on the order of $\|E\|$ via the perturbation results of \cite{HammHuangPer}.  

We also consider stability of sampling in terms of leverage scores instead of column and row lengths as in Theorem \ref{THM:IntroC}.  It was already known that sampling $O(k\log k)$ columns and rows via Leverage Score probabilities yields a valid CUR decomposition with high probability \cite{DMPNAS}, but we show that this is also stable as follows.

\begin{thmnum}[Corollary \ref{COR:LeverageStability}]\label{THM:IntroLev}
If $A$ has rank $k$, and $p_i, q_i$ are the Leverage Score probability distributions over the columns and rows of $A$, respectively, then for any probability distributions which satisfy $\widetilde{p}_i\geq\alpha p_i$, $\widetilde{q}_i\geq\alpha q_i$ for some $\alpha>0$, sampling $O(k\log k)$
columns and rows independently with replacement according to probabilities $\widetilde{p}$ and $\widetilde{q}$ implies that $A=CU^\dagger R$ with high probability.
\end{thmnum}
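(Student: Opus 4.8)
The plan is to reduce the identity $A=CU^\dagger R$ to two rank conditions on submatrices of the orthonormal factors of $A$, and then to obtain each of these from a single spectral concentration estimate of Rudelson--Vershynin type \cite{Rudelson_2007}, exactly as in the proof of Theorem~\ref{THM:StableCUR}. Write a thin singular value decomposition $A=U_A\Sigma_A V_A^\top$ with $U_A\in\R^{m\times k}$ and $V_A\in\R^{n\times k}$ having orthonormal columns and $\Sigma_A\in\R^{k\times k}$ invertible, so that the row leverage scores of $A$ are $\ell_i=\|U_A(i,:)\|_2^2$ (with $\sum_i\ell_i=k$, whence $q_i=\ell_i/k$) and the column leverage scores are $\|V_A(j,:)\|_2^2$. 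By the characterization of exact CUR decompositions restated in Section~\ref{SEC:Background}, $A=CU^\dagger R$ holds if and only if $\rank(U)=k$. Since $U=A(I,J)=U_A(I,:)\,\Sigma_A\,V_A(J,:)^\top$ with $\Sigma_A$ invertible, we have $\rank(U)=k$ if and only if $\rank(U_A(I,:))=k$ and $\rank(V_A(J,:))=k$, and because the row set $I$ and the column set $J$ are drawn independently it suffices to bound the failure probability of each of these two events and take a union bound.

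The two events are structurally identical, so consider $\rank(U_A(I,:))=k$, where $I$ consists of $\ell=O(k\log k)$ indices sampled i.i.d.\ from $\widetilde q$ with $\widetilde q_i\geq\alpha q_i=\alpha\ell_i/k$. Form the rescaled sampled matrix $\widehat U=\tfrac{1}{\sqrt\ell}\bigl[\,U_A(i_1,:)/\sqrt{\widetilde q_{i_1}}\;;\;\cdots\;;\;U_A(i_\ell,:)/\sqrt{\widetilde q_{i_\ell}}\,\bigr]\in\R^{\ell\times k}$. Because $\widetilde q_i>0$ wherever $\ell_i>0$, the rescaling makes the expectation exact: $\E[\widehat U^\top\widehat U]=\sum_i\widetilde q_i\cdot\tfrac{1}{\widetilde q_i}U_A(i,:)^\top U_A(i,:)=U_A^\top U_A=I_k$, just as in ordinary leverage-score sampling. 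The oversampling hypothesis enters only through the almost-sure coherence bound $\max_i\|U_A(i,:)\|_2^2/\widetilde q_i\leq\max_i\ell_i/(\alpha\ell_i/k)=k/\alpha$. Feeding this into the Rudelson--Vershynin estimate (equivalently, a matrix Chernoff/Bernstein bound) yields an absolute constant $c$ such that $\ell\geq c\,(k/\alpha)\log k$ forces $\|\widehat U^\top\widehat U-I_k\|\leq\tfrac12$ with high probability; on that event $\widehat U$ has full column rank $k$, and since its rows are nonzero rescalings of the rows of $U_A(I,:)$ we conclude $\rank(U_A(I,:))=k$. Replacing $U_A,\widetilde q,q$ by $V_A,\widetilde p,p$ gives $\rank(V_A(J,:))=k$ by the same computation. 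Since $\alpha$ is a fixed constant, $c(k/\alpha)\log k=O(k\log k)$, so this is precisely the reasoning of Theorem~\ref{THM:StableCUR}, specialized to the cleaner case in which the sampling weights are comparable to the leverage scores of the relevant orthonormal factor rather than merely to the Euclidean lengths of the rows and columns of $A$; the non-oversampled version is already recorded in \cite{DMPNAS}.

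Combining the two high-probability events via a union bound and absorbing the $1/\alpha$ into the implied constant completes the argument. I expect the only genuinely delicate point to be the oversampling (\emph{domination}) step: one must check that passing from the exact leverage distributions $p,q$ to any dominating distributions $\widetilde p,\widetilde q$ leaves the expected rescaled Gram matrix equal to $I_k$ — which it does, because the rescaling in $\widehat U$ uses the probabilities actually employed — while only degrading the coherence parameter by the factor $1/\alpha$, so that the form of the spectral concentration, and hence the $O(k\log k)$ sample complexity, is preserved. The remaining ingredients (the equivalence with $\rank(U)=k$ from Section~\ref{SEC:Background}, the factorization of $U$ through $U_A$ and $V_A$, and the independence of the row and column draws) are routine once the machinery of Theorems~\ref{THM:ColRowChnM} and \ref{THM:StableCUR} is in hand.
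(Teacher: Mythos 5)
Your proposal is correct, but it proves the result by a different route than the paper does. The paper's proof of Corollary \ref{COR:LeverageStability} is a short reduction to the sampling-dependent bound of Yang et al., restated as Theorem \ref{THM:GeneralSamplingInequality}: the domination hypothesis gives $c(\widetilde{p})\leq\alpha^{-1}$ (and likewise for $\widetilde{q}$), choosing $\ell = \frac{8}{\alpha}\left(\log(2k)+\frac1\delta\right)k$ makes each one-sided event $\rank(C)=k$, $\rank(R)=k$ hold with probability at least $1-2e^{-\frac1\delta}$, and Theorem \ref{THM:CUR} converts the two rank statements into $A=CU^\dagger R$; the concentration itself is outsourced to \cite{Yang}. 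You instead re-derive the concentration from scratch: you reduce $\rank(U)=k$ to full column rank of $U_A(I,:)$ and $V_A(J,:)$ (equivalent, since $\rank(R)=\rank(U_A(I,:))$ and $\rank(C)=\rank(V_A(J,:))$, to condition (v) of Theorem \ref{THM:CUR} that the paper uses), and then push the importance-sampled Gram matrix of the orthonormal factor through Theorem \ref{THM:RVLargeNumbers} (or a matrix Chernoff bound), noting that rescaling by the probabilities actually used keeps the expectation exactly $I_k$ while the domination only degrades the almost-sure bound to $M^2\leq k/\alpha$; since the sampled matrix is $D\,U_A(I,:)$ for an invertible diagonal $D$, the deviation bound $\|\widehat{U}^*\widehat{U}-I_k\|_2\leq\tfrac12$ indeed forces the rank condition. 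That is sound, and it correctly isolates the only delicate point (unbiasedness under oversampling plus the $1/\alpha$ loss in coherence). The trade-off: the paper's route yields explicit constants and the success probability $(1-2e^{-\frac1\delta})^2$ with essentially no new analysis, while yours is self-contained and parallels the proof of Theorem \ref{THM:StableCUR} (the machinery of \cite{Rudelson_2007} applied to the singular factors rather than to $A$); one small caveat is that with the Rudelson--Vershynin tail as stated the failure probability at $\ell=O((k/\alpha)\log k)$ is only a tunable constant, so to get failure probability decaying in $k$ you should invoke the matrix Chernoff version you mention parenthetically, which is also what underlies \cite{Yang}.
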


With stability results according to the sampling schemes, we can prove a guarantee for solving the Subspace Clustering problem via randomized sampling; see Corollary \ref{COR:SC}.

\subsection{Prior Works}

The analogue of Theorem \ref{THM:IntroB} was proven for Leverage Score sampling \cite{DMPNAS} (see Section \ref{SEC:RandomSampling} for the definition), but these are expensive to compute exactly as it requires computing the truncated SVD of $A$, and moreover the sampling complexity there is higher order in $k$.  Uniform sampling guarantees under incoherence assumptions on the columns of a matrix were given by Chiu and Demanet \cite{DemanetWu} for CUR approximations.  Most of the CUR approximation literature considers the case when $A$ has full rank and one forms an approximation $A\approx CUR$ where $U$ can take a variety of forms.  Typical estimates are in terms of the truncated SVD of $A$, i.e., of the additive error form: sampling $O(f(k,\eps))$ columns and rows to yield $\|A-CUR\|\leq \sigma_{k+1}(A) + O(\eps)\|A\|$, or relative error form: sampling $O(g(k,\eps))$ columns and rows to yield $\|A-CUR\|\leq (1+O(\eps))\sigma_{k+1}(A)$, where typically $f$ and $g$ are polynomial in $k$ and $\eps^{-1}$ and logarithms of these factors.  An incomplete but representative list of papers in this vein are \cite{DKMIII,DMM08,DMPNAS}.  Finally, the case when $A$ is square and symmetric positive semidefinite falls under the purview of the Nystr\"{o}m method, and finds abundant applications in Machine Learning due to the use of kernel matrices there.  Some references in this line are \cite{DM05,gittens2011spectral,GittensMahoney,BeckerNystrom}, but we note that there are significant differences in CUR approximations compared to Nystr\"{o}m ones (see \cite[Section 4]{HammHuangPer}).

\subsection{Layout}

The rest of the paper proceeds as follows: Section \ref{SEC:Background} contains the relevant notation and a characterization theorem for CUR decompositions which motivates some of the algorithmic aspects of the sequel; Section \ref{SEC:ColumnSelection} discusses deterministic and random column and row selection methods, and contains the precise statement of Theorem \ref{THM:IntroB}.  Section \ref{SEC:Stability} contains precise statements of our stability results in Theorem \ref{THM:IntroC} and Corollary \ref{COR:IntroA}, and Section \ref{SEC:LevStability} contains the precise statement of Theorem \ref{THM:IntroLev}.  The tie-in to Subspace Clustering is in Section \ref{SEC:SC}, and a summary of the different results and complexities along with a discussion of benefits and drawbacks of each is contained in Section \ref{SEC:Complexities}, while the remainder of the sections contain the proofs of the main results.

\section{Background}\label{SEC:Background}

\subsection{Notations}\label{SEC:Notation}

We will use $\K$ to represent either the real or complex field.  Any matrix $A\in\K^{m\times n}$ has a Singular Value Decomposition of the form $A=W\Sigma V^*$, where $W$ and $V$ are orthogonal matrices, and $\Sigma$ has entries only along its diagonal (the eigenvalues of $A^*A$ or equivalently of $AA^*$) which are the \textit{singular values} of $A$, and which are assumed to be in decreasing order and are denoted by $\sigma_{\max}=\sigma_1\geq\sigma_2\geq\dots \geq\sigma_{\min}=\sigma_{\rank(A)}>0$ (the rest of the singular values, if any, are $0$, but we will be concerned primarily with rectangular matrices and so will consider $\sigma_{\min}$ to be the minimal nonzero singular value of $A$).  If the underlying matrix must be specified, we write $\sigma_i(A)$.  For a given matrix, $\kappa(A)$ will denote its generalized spectral condition number (e.g., \cite{demko1986condition}), i.e., \[\kappa(A):=\frac{\sigma_{\max}(A)}{\sigma_{\min}(A)} = \|A\|_2\|A^\dagger\|_2.\]

The symbol $[n]$ denotes the set $\{1,\dots,n\}$ for $n\in\N$.  Given $I\subset[m]$, $A(I,:)$ represents the row submatrix of $A$ according to the set $I$ (i.e., $A(I,:)\in\K^{|I|\times n})$.  The column submatrix $A(:,J)\in\K^{m\times|J|}$ is defined similarly for $J\subset[n]$, and $A(I,J)$ is the overlap of these two.

We use $a\gtrsim b$ to mean that $a\geq cb$ for some universal constant $c>0$.

\subsection{Characterization of CUR decompositions}

For the reader's convenience, we recall the following characterization of CUR decompositions of low-rank matrices given in \cite{HammHuangACHA}.

\begin{theorem}[\cite{HammHuangACHA}]\label{THM:CUR}
Let $A\in\K^{m\times n}$ be fixed, and let $I\subset[m]$, $J\subset[n]$.  Let $C=A(:,J)$ and $R=A(I,:)$ be column and row submatrices of $A$, respectively, and let $U=A(I,J)$ be their intersection.  Then the following are equivalent:
\begin{enumerate}[(i)]
    \item\label{COND:ranku} $\rank(U)=\rank(A)$
    \item\label{COND:CUR} $A=CU^\dagger R$
    \item\label{COND:CCARR} $A = CC^\dagger AR^\dagger R$
    \item\label{COND:Adagger} $A^\dagger = R^\dagger UC^\dagger$
    \item\label{COND:Ranks} $\rank(C)=\rank(R)=\rank(A)$.
\end{enumerate}
Moreover, if any of the equivalent conditions above hold, then $U^\dagger = C^\dagger AR^\dagger$.
\end{theorem}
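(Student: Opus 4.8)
The plan is to organize the five conditions around the rank statement $(\mathrm{v})$: I will first establish $(\mathrm{i})\Leftrightarrow(\mathrm{v})$, then show that each of $(\mathrm{ii})$, $(\mathrm{iii})$, $(\mathrm{iv})$ forces $(\mathrm{v})$ by a short rank count, and finally show that $(\mathrm{v})$ implies all of $(\mathrm{ii})$, $(\mathrm{iii})$, $(\mathrm{iv})$ together with the closing identity $U^\dagger=C^\dagger AR^\dagger$. The only facts needed are elementary: for any matrix $B$, the products $BB^\dagger$ and $B^\dagger B$ are the orthogonal projectors onto the column space $\mathrm{col}(B)$ and the row space $\mathrm{row}(B)$, and $BB^\dagger B=B$; passing to a submatrix never increases rank, so $\rank(C),\rank(R),\rank(U)\le\rank(A)$, with $\mathrm{col}(C)\subseteq\mathrm{col}(A)$, $\mathrm{row}(R)\subseteq\mathrm{row}(A)$, $\mathrm{col}(U)\subseteq\mathrm{col}(R)$, $\mathrm{row}(U)\subseteq\mathrm{row}(C)$; and $\rank(XYZ)\le\min\{\rank(X),\rank(Z)\}$. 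The one useful remark is that each of these inclusions becomes an \emph{equality} of subspaces as soon as the two ranks coincide, and an equality of subspaces upgrades the associated projector to the identity on that subspace; for instance $\mathrm{col}(C)=\mathrm{col}(A)$ yields $CC^\dagger A=A$.

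For $(\mathrm{i})\Leftrightarrow(\mathrm{v})$: $(\mathrm{i})\Rightarrow(\mathrm{v})$ is immediate since $\rank(A)=\rank(U)\le\rank(C)\le\rank(A)$ and likewise for $R$; for $(\mathrm{v})\Rightarrow(\mathrm{i})$, equality $\mathrm{col}(C)=\mathrm{col}(A)$ gives $A=C(C^\dagger A)$, and restricting to the rows indexed by $I$ (using $C(I,:)=U$) yields $R=U(C^\dagger A)$, hence $\rank(R)\le\rank(U)\le\rank(R)$ and $\rank(U)=\rank(A)$. The converses $(\mathrm{ii}),(\mathrm{iii}),(\mathrm{iv})\Rightarrow(\mathrm{v})$ are one line each from the product-rank bound: $A=CU^\dagger R$ gives $\rank(A)\le\rank(C)\le\rank(A)$ and $\rank(A)\le\rank(R)\le\rank(A)$; $A=CC^\dagger AR^\dagger R$ gives the same using $\rank(CC^\dagger)=\rank(C)$ and $\rank(R^\dagger R)=\rank(R)$; and $A^\dagger=R^\dagger UC^\dagger$ gives it using $\rank(A^\dagger)=\rank(A)$, $\rank(R^\dagger)=\rank(R)$, $\rank(C^\dagger)=\rank(C)$.

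The substantive direction is $(\mathrm{v})\Rightarrow(\mathrm{ii}),(\mathrm{iii}),(\mathrm{iv})$. Assuming $(\mathrm{v})$, hence also $(\mathrm{i})$, all the inclusions from the toolkit become equalities, and in particular $CC^\dagger A=A$, $AR^\dagger R=A$, and $UU^\dagger R=R$; setting $X:=C^\dagger A$ gives $A=CX$ and $R=UX$ as above, and restricting $A=AR^\dagger R$ to the columns indexed by $J$ (using $R(:,J)=U$) gives $C=AR^\dagger U$. Then $(\mathrm{iii})$ is the one-line substitution $CC^\dagger AR^\dagger R=(CC^\dagger A)R^\dagger R=AR^\dagger R=A$, and $(\mathrm{ii})$ is $CU^\dagger R=(AR^\dagger U)U^\dagger R=AR^\dagger(UU^\dagger R)=AR^\dagger R=A$; for $(\mathrm{iv})$ I verify the four Moore--Penrose equations for $M:=R^\dagger UC^\dagger$, which reduce to $AM=CC^\dagger$ and $MA=R^\dagger R$ (both self-adjoint), $AMA=A$, and $MAM=M$, all coming from $AR^\dagger U=C$ and $UC^\dagger A=UX=R$. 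The closing identity follows the same way: $(\mathrm{ii})$ gives $C^\dagger AR^\dagger=(C^\dagger C)U^\dagger(RR^\dagger)$, and since $\rank(U)=\rank(C)=\rank(R)$ the projectors $C^\dagger C$ and $RR^\dagger$ act as the identity on $U^\dagger$, so $C^\dagger AR^\dagger=U^\dagger$.

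I expect the main difficulty to be one of discipline rather than depth: pseudoinverses may not be manipulated by the reverse-order law $(PQ)^\dagger=Q^\dagger P^\dagger$ (false in general), so every step must be routed through the projector identities, whose validity hinges on the rank hypothesis turning the automatic column/row-space inclusions into equalities. The fiddliest single computation is the Moore--Penrose check in $(\mathrm{v})\Rightarrow(\mathrm{iv})$, but it is mechanical once $AR^\dagger U=C$ and $UC^\dagger A=R$ are in hand. A slicker but less self-contained alternative, which I would note, is to fix any rank-$k$ factorization $A=FG$, observe that then $C=F\,G(:,J)$, $R=F(I,:)\,G$, and $U=F(I,:)\,G(:,J)$, and carry out the computations via the reverse-order law, which \emph{is} legitimate here because under $(\mathrm{v})$ the left factors have full column rank and the right factors full row rank, so that $F(I,:)^\dagger F(I,:)=G(:,J)\,G(:,J)^\dagger=I_k$ and e.g.\ $CU^\dagger R=F\,G(:,J)\,G(:,J)^\dagger\,F(I,:)^\dagger F(I,:)\,G=FG=A$.
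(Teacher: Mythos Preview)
Your argument is correct. There is nothing in the present paper to compare it against: Theorem~\ref{THM:CUR} is quoted from \cite{HammHuangACHA} without proof here, so the paper itself contains no demonstration of these equivalences.

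A couple of minor remarks. In your derivation of the closing identity you invoke the rank equalities to justify $(C^\dagger C)U^\dagger=U^\dagger$ and $U^\dagger(RR^\dagger)=U^\dagger$, but in fact these hold unconditionally: $\mathrm{col}(U^\dagger)=\mathrm{row}(U)\subseteq\mathrm{row}(C)$ and $\mathrm{row}(U^\dagger)=\mathrm{col}(U)\subseteq\mathrm{col}(R)$ follow just from $U=C(I,:)=R(:,J)$, so the projectors absorb $U^\dagger$ automatically. The rank hypothesis is needed only to get $A=CU^\dagger R$ in the first place. Also, in the Moore--Penrose check for $(\mathrm{v})\Rightarrow(\mathrm{iv})$ your computation $MAM=(R^\dagger R)R^\dagger UC^\dagger=R^\dagger UC^\dagger=M$ is cleanest via $R^\dagger RR^\dagger=R^\dagger$; the line as written is fine but you might make that step explicit. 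The alternative you sketch via a fixed rank factorization $A=FG$ is indeed the route taken in the original reference, and your observation that the reverse-order law is legitimate there (left factors full column rank, right factors full row rank under $(\mathrm{v})$) is exactly the point.
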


An important note for the sequel is that Theorem \ref{THM:CUR} holds even when $I$ and $J$ are allowed to be subsets of indices with repetitions allowed, and thus, e.g., $C$ may contain repeated columns of $A$.  Additionally, the equivalence \eqref{COND:ranku}$\Leftrightarrow$\eqref{COND:Ranks} allows for algorithms which choose columns and rows in parallel rather than sequentially which still allow one to show that an exact CUR decomposition of the form \eqref{COND:CUR} or equivalently \eqref{COND:CCARR} is obtained.

\section{Column and Row Sampling for CUR Decompositions}\label{SEC:ColumnSelection}

Here, we tackle the problem of determining how to select columns and rows such that an exact CUR decomposition of a pure low-rank matrix $A$ may be obtained.  The methods considered here break down into two categories: deterministic and random sampling.  Typically randomized methods require one to oversample columns and rows to ensure an exact decomposition and so might naturally not always preferred; however, their complexity may be less than the deterministic algorithms and so may be more suitable for truly large-scale matrices.  We proceed by highlighting several procedures in each category, and end the section by comparing their overall complexities.

\subsection{Deterministic Sampling}
There are several deterministic methods for column subset selection; for example, the QR decomposition based algorithm of Voronin and Martinsson \cite{VoroninMartinsson} and the Discrete Empirical Interpolation Method (DEIM) of Sorensen and Embree \cite{chaturantabut2010nonlinear,SorensenDEIMCUR}.
In this section, we will present DEIM for choosing column and row submatrices of $A$ which guarantees an exact CUR decomposition for a low-rank matrix.

The DEIM algorithm chooses $k$ columns from $A\in\mathbb{K}^{m\times n}$ by viewing the columns of 
$V_k=[v_1\quad v_2\quad\ldots\quad v_k]$ one at a time, where $v_i$  is the right singular vector of $A$ corresponding to the $i$-th largest singular value of $A$.  The algorithm starts from the leading singular vector $v_1$, and the first index $p_1$ corresponds to the largest magnitude entry in $v_1$, i.e.,$|v_1(p_1)|=\|v_1\|_{\infty}. $ With $I_n$ being the $n\times n$ identity, set ${\bf p_1}=[p_1]$, $P_1=I_n(:,{\bf p_1})$, $V_1=[v_1 ]$, and define 
the projection operator $\mathcal{P}_1=v_1(P_1^Tv_1)^{-1}P_1^T$. 

Suppose we have $j-1$ indices, with 
\[{\bf p_{j-1}}=\begin{bmatrix}p_1\\ \vdots\\
p_{j-1}
\end{bmatrix},\quad P_{j-1}=I_n(:,{\bf p_{j-1}}),\quad V_{j-1}=[v_1 \quad \ldots\quad v_{j-1}],\]  and \[\mathcal{P}_{j-1}=V_{j-1}(P_{j-1}^TV_{j-1})^{-1}P_{j-1}^T.\] Define the residual $r_j=v_j-\mathcal{P}_{j-1}v_j$, and the next index $p_j$ is chosen such that $|r_j(p_j)|=\|r_j\|_\infty.$

After $\ell$ iterations in the DEIM algorithm, we have indices ${\bf p_\ell}$ which satisfy the property $\|V_k({\bf p_\ell},:)\|<\sqrt{\frac{m\ell}{3}}2^\ell$.

	
	
	

	

	
	
	
	

	

\begin{proposition}\label{Prop: DEIM_NF}
Let $A\in\K^{m\times n}$ have rank $k$, and let $J\subset[n]$ be the set of $k$ column indices given by implementing the DEIM algorithm. 
Set $C=A(:,J)$.  Then $\rank(C)=\rank(A)$.  Consequently, if $I\subset[m]$ is a set of row indices given by running the DEIM algorithm 
on $A^*$ and $U=A(I,J)$, $R=A(I,:)$, then $A=CU^\dagger R$.
\end{proposition}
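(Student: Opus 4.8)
The plan is to combine the thin singular value decomposition of $A$ with the defining property of the DEIM selection, namely that the chosen rows of the matrix of right singular vectors form a nonsingular $k\times k$ block. Write the rank-$k$ thin SVD as $A = W_k\Sigma_k V_k^*$, where $W_k\in\K^{m\times k}$ and $V_k\in\K^{n\times k}$ have orthonormal columns and $\Sigma_k\in\K^{k\times k}$ is invertible and diagonal. Let $\mathbf{p}_k$ be the ordered tuple of indices produced by DEIM run on $A$, so that $J=\mathbf{p}_k$ as a set, and set $P=I_n(:,\mathbf{p}_k)\in\K^{n\times k}$. Then $C=A(:,J)=AP=W_k\Sigma_k(P^TV_k)^*$, and the whole question reduces to understanding the $k\times k$ block $P^TV_k=V_k(\mathbf{p}_k,:)$.

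The first and essential step is to show $V_k(\mathbf{p}_k,:)$ is nonsingular; this is the standard DEIM fact, which I would prove by induction on the iteration count $j$. The interpolatory projector satisfies $P_{j-1}^T\mathcal{P}_{j-1}=P_{j-1}^T$, so the residual $r_j=v_j-\mathcal{P}_{j-1}v_j$ vanishes on the previously chosen indices $\mathbf{p}_{j-1}$; since $v_1,\dots,v_j$ are orthonormal (hence linearly independent), $v_j\notin\spn\{v_1,\dots,v_{j-1}\}$, so $r_j\neq 0$, the new index $p_j$ with $|r_j(p_j)|=\|r_j\|_\infty$ is well defined, and $p_j\notin\mathbf{p}_{j-1}$. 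Splitting $V_j(\mathbf{p}_j,:)$ into blocks along the partitions $\{v_1,\dots,v_{j-1}\}\cup\{v_j\}$ of columns and $\mathbf{p}_{j-1}\cup\{p_j\}$ of rows, a Schur-complement computation identifies the Schur complement of the (invertible, by induction) leading block $V_{j-1}(\mathbf{p}_{j-1},:)$ as exactly the scalar $r_j(p_j)$, whence $\det V_j(\mathbf{p}_j,:)=r_j(p_j)\,\det V_{j-1}(\mathbf{p}_{j-1},:)\neq 0$; the base case $j=1$ is immediate from $|v_1(p_1)|=\|v_1\|_\infty>0$. (This invertibility is the qualitative content underlying the quantitative DEIM bound displayed above.)

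With $V_k(\mathbf{p}_k,:)$ invertible, $C=W_k\Sigma_k(P^TV_k)^*$ is the product of the rank-$k$ matrix $W_k$ with two invertible $k\times k$ matrices on the right, so $\rank(C)=k=\rank(A)$, proving the first assertion. For the consequence, running DEIM on $A^*=V_k\Sigma_k W_k^*$ (whose right singular vectors are the columns of $W_k$) selects $I\subset[m]$ with $W_k(I,:)$ nonsingular by the identical argument; writing $Q=I_m(:,I)$ gives $R=A(I,:)=Q^TA=(Q^TW_k)\Sigma_kV_k^*$, a product of an invertible $k\times k$ matrix, $\Sigma_k$, and the rank-$k$ matrix $V_k^*$, so $\rank(R)=k=\rank(A)$. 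Since $\rank(C)=\rank(R)=\rank(A)$, the equivalence \eqref{COND:Ranks}$\Leftrightarrow$\eqref{COND:CUR} of Theorem \ref{THM:CUR} yields $A=CU^\dagger R$ with $U=A(I,J)$.

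The main obstacle is the nonsingularity of $V_k(\mathbf{p}_k,:)$; once the thin SVD is introduced, the remaining rank bookkeeping is routine. Within the inductive argument, the delicate points are correctly invoking the interpolation identity $P_{j-1}^T\mathcal{P}_{j-1}v_j=P_{j-1}^Tv_j$ and recognizing that the Schur complement of the enlarged block collapses to $r_j(p_j)$. A minor care point is that $J$ is a priori an ordered tuple, but the selected indices are pairwise distinct, so this is harmless—and in any case Theorem \ref{THM:CUR} tolerates repeated indices.
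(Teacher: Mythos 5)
Your proof is correct and follows the same route as the paper: establish $\rank(C)=\rank(R)=\rank(A)$ and then invoke the equivalence \eqref{COND:Ranks}$\Leftrightarrow$\eqref{COND:CUR} of Theorem \ref{THM:CUR}. The only difference is in how the rank equalities are obtained: the paper simply cites \cite[Lemma 3.2]{SorensenDEIMCUR}, whereas you re-derive the qualitative content of that lemma from scratch, proving by induction (interpolation identity $P_{j-1}^T\mathcal{P}_{j-1}=P_{j-1}^T$, $r_j\neq0$ by orthonormality, Schur complement equal to $r_j(p_j)$) that $V_k(\mathbf{p}_k,:)$ and $W_k(I,:)$ are nonsingular, and then read off the ranks of $C=W_k\Sigma_k\left(V_k(\mathbf{p}_k,:)\right)^*$ and $R=W_k(I,:)\Sigma_kV_k^*$ from the thin SVD. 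Your version is self-contained (and your Schur-complement induction is sound), while the citation route additionally carries the quantitative bound $\|V_k(\mathbf{p}_\ell,:)^{-1}\|$ from \cite{SorensenDEIMCUR}, which is not needed here but is used elsewhere in the paper's stability analysis of DEIM.
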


\begin{proof}
By \cite[Lemma 3.2]{SorensenDEIMCUR}, we have that $\text{rank}(R)=\text{rank}(A)$ and $\text{rank}(C)=\rank(A)$. Therefore, $A=CU^\dagger R$ by Theorem \ref{THM:CUR}.
\end{proof}

\subsection{Randomized Sampling}\label{SEC:RandomSampling}


Here we ask the question: given a low rank matrix $A$, how should we choose rows and columns to obtain a valid CUR decomposition as in Theorem \ref{THM:CUR}? Due to the equivalence of \eqref{COND:CUR} and \eqref{COND:Ranks} in Theorem \ref{THM:CUR}, it suffices to choose columns and rows of $A$ independently such that the related matrices $C$ and $R$ have the same rank as $A$ itself. 

When randomly sampling columns, one might either do so via a Bernoulli random trial at each column, or alternatively might sample columns with or without replacement according to a given probability distribution over the indices.  Here, we focus on the latter setting and show that in most cases, mildly oversampling columns yields a valid CUR decomposition with high probability.  In particular, while one evidently must sample at least $k$ columns, we show that often sampling $O(k\log k)$ columns is effective.  Three primary sampling distributions are considered based on previous work -- uniform \cite{DemanetWu}, column lengths \cite{DKMIII,KannanVempala}, and leverage scores \cite{DMM08}.  They are defined thusly:
\[p_j^\text{unif}:=\frac{1}{n},\quad p_j^\text{col}:=\frac{\|A(:,j)\|_2^2}{\|A\|_F^2},\quad p_j^\text{lev,k}:= \frac{1}{k}\|V_k(j,:)\|_2^2, \quad j\in[n].\]
The distributions for the rows are defined analogously, with $V_k$ being replaced by $W_k$ (the left singular vectors) in the case of leverage scores; for notational purposes we denote these $q_i^\text{unif}$, $q_i^\text{row}$, and $q_i^{\text{lev},k}$ for $i\in[m]$.  Note that for leverage scores, $k$ does not have to be the rank of $A$ in general, but the parameter $k$ determines how much the right singular vectors are truncated.

Uniform sampling is the easiest and cheapest to implement, but it can fail to provide good results, especially given a sparse input matrix, for example.  On the other hand, leverage scores typically achieve the best performance because they capture the eigenspace structure of the matrix, but this comes at the cost of a higher computational load to compute the distribution as it requires computing the truncated SVD of the initial matrix.  Column/row length sampling typically lies between both of the others in terms of performance as well as computational complexity.


In the algorithms which sample in this manner, the number of rows and columns chosen is fixed and deterministic, but when Bernoulli trials are used one only knows the expected number which will be selected.

Here we must introduce the concept of the \textit{stable rank} \cite{tropp2009column}, also called \textit{numerical rank} \cite{Rudelson_2007}, of $A$, defined by \[\text{st.rank}(A):=\frac{\|A\|_F^2}{\|A\|_2^2} = \sum_{i=1}^{\rank(A)}\frac{\sigma_i(A)^2}{\sigma_1(A)^2}.\]  Evidently, $\text{st.rank}(A)\leq\rank(A)$.   

One of the primary reasons for considering the stable rank of a matrix is that it is stable under small perturbations (whereas the rank is certainly not).  In particular, if $\widetilde{A} = A+E$, then applications of the triangle inequality produce
\[ \text{st.rank}(A)\left(\dfrac{1-\|E\|_F/\|A\|_F}{1+\|E\|_2/\|A\|_F}\right)^2 \leq \text{st.rank}(\widetilde{A})\leq \text{st.rank}(A)\left(\dfrac{1+\|E\|_F/\|A\|_F}{1-\|E\|_2/\|A\|_2}\right)^2.\]  We see that if $\|E\|_F/\|A\|_F$ and $\|E\|_2/\|A\|_2$ are small, then the stable ranks of $\widetilde{A}$ and $A$ are close, implying the claim.

The following theorem shows that one may sample essentially $r\log r$ columns and rows of a matrix (with $r=\text{st.rank}(A)$) to obtain an exact CUR decomposition with high probability.

\begin{theorem}\label{THM:ColRowChnM}Let $A\in\mathbb{K}^{m\times n}$ have rank $k$ and stable rank $r$.  Let $\delta\in(0,1)$, and let $0<\eps<\kappa(A)^{-1}$. Let $d_1\in[m]$, $d_2\in[n]$ satisfy
	\[
		d_1,d_2\gtrsim \left(\frac{r}{\eps^4\delta}\right)\log\left(\frac{r}{\eps^4\delta}\right).
		\]
Choose $I\subset[m]$ by sampling $d_1$ rows of $A$ independently with replacement according to probabilities $q_i^\textnormal{row}$ and choose $J\subset[n]$ by sampling $d_2$ columns of $A$ independently with replacement according to  $p_i^\textnormal{col}$.  Set $R=A(I,:)$, $C=A(:,J)$, and $U=A(I,J)$. 
Then with probability at least $(1-2\exp(-c/\delta))^2$,
\[ \rank(U)=k \text{ and } A=CU^\dagger R. \]
Moreover, the conclusion of the theorem also holds if we take $I_0$ and $J_0$ to be the indices of $I$ and $J$ above without repeated entries.
\end{theorem}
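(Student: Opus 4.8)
The plan is to reduce everything to the ranks of $C$ and $R$ and then invoke Theorem~\ref{THM:CUR}. By the equivalence of conditions \eqref{COND:ranku}, \eqref{COND:CUR}, and \eqref{COND:Ranks} there, it suffices to prove that with probability at least $(1-2\exp(-c/\delta))^2$ one has $\rank(C)=\rank(R)=k$; the claims $\rank(U)=k$ and $A=CU^\dagger R$ then follow. Since $C=A(:,J)$ and $R=A(I,:)$ are submatrices of $A$ we always have $\rank(C),\rank(R)\le k$, so only the reverse inequalities are at issue. Because $I$ and $J$ are drawn independently, the events $\{\rank(C)=k\}$ and $\{\rank(R)=k\}$ are independent, which accounts for the square in the probability bound; and by symmetry (replace $A$ by $A^{*}$ and $p^\textnormal{col}$ by $q^\textnormal{row}$) it is enough to handle $C$.

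To bound $\rank(C)$ below, form the reweighted sampled matrix $\widetilde{C}\in\K^{m\times d_2}$ whose $t$-th column is $(d_2\, p^\textnormal{col}_{j_t})^{-1/2}A(:,j_t)$; since scaling columns by nonzero scalars preserves rank and a column with $p_j^\textnormal{col}=0$ is never selected, $\rank(\widetilde C)=\rank(C)$. A direct computation gives
\[
\widetilde{C}\widetilde{C}^{*}=\frac{1}{d_2}\sum_{t=1}^{d_2}\frac{1}{p^\textnormal{col}_{j_t}}A(:,j_t)A(:,j_t)^{*},
\]
an average of $d_2$ i.i.d.\ rank-one Hermitian operators, each of spectral norm $\|A(:,j_t)\|_2^2/p^\textnormal{col}_{j_t}=\|A\|_F^2$ and common expectation $\sum_{j\in[n]}A(:,j)A(:,j)^{*}=AA^{*}$. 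This is exactly the setting of the sampling estimate of Rudelson and Vershynin \cite{Rudelson_2007}, which I use in the form: for $\eps'\in(0,1)$ and $\delta\in(0,1)$, if $d_2\gtrsim (r/(\delta(\eps')^2))\log(r/(\delta(\eps')^2))$ with $r=\|A\|_F^2/\|A\|_2^2$ the stable rank, then with probability at least $1-2\exp(-c/\delta)$,
\[
\bigl\|\widetilde{C}\widetilde{C}^{*}-AA^{*}\bigr\|_{2}\le \eps'\,\|A\|_2^2 .
\]

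Apply this with $\eps'=\eps^2$. The hypothesis $\eps<\kappa(A)^{-1}$ gives $\eps^2<\kappa(A)^{-2}=\sigma_{\min}(A)^2/\sigma_{\max}(A)^2$, and the lower bound on $d_2$ becomes precisely $d_2\gtrsim (r/(\eps^4\delta))\log(r/(\eps^4\delta))$. On the good event, Weyl's inequality for the Hermitian matrices $\widetilde C\widetilde C^{*}$ and $AA^{*}$ gives, for their $k$-th largest eigenvalues,
\[
\lambda_k\bigl(\widetilde C\widetilde C^{*}\bigr)\ge \lambda_k(AA^{*})-\bigl\|\widetilde C\widetilde C^{*}-AA^{*}\bigr\|_2\ge \sigma_{\min}(A)^2-\eps^2\,\sigma_{\max}(A)^2>0,
\]
so $\widetilde C\widetilde C^{*}$ has at least $k$ strictly positive eigenvalues, whence $\rank(C)=\rank(\widetilde C)=\rank(\widetilde C\widetilde C^{*})\ge k$ and therefore $\rank(C)=k$. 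The identical argument on $A^{*}$ yields $\rank(R)=k$ on an independent event of probability at least $1-2\exp(-c/\delta)$; intersecting the two events and applying Theorem~\ref{THM:CUR} proves $\rank(U)=k$ and $A=CU^\dagger R$ with probability at least $(1-2\exp(-c/\delta))^2$. For the ``moreover'' claim, let $J_0$ be the distinct indices occurring in $J$ and $I_0$ those in $I$; deleting repeated columns (resp.\ rows) does not change a column (resp.\ row) span, so $\rank(A(:,J_0))=\rank(C)=k$ and $\rank(A(I_0,:))=\rank(R)=k$, and a further application of Theorem~\ref{THM:CUR} gives the same conclusion with $I_0,J_0$ in place of $I,J$.

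The main obstacle is invoking the Rudelson--Vershynin estimate in exactly the quantitative form stated above — in particular matching the tail probability $2\exp(-c/\delta)$ with the required lower bound on $d_1,d_2$ — together with the bookkeeping around the choice $\eps'=\eps^2$: this trades the natural $\eps^{-2}$ sampling rate for the $\eps^{-4}$ appearing in the theorem, and makes the condition-number restriction $\eps<\kappa(A)^{-1}$ exactly the threshold needed to keep $\lambda_k(\widetilde C\widetilde C^{*})$ strictly positive on the good event. The remaining pieces — the reduction through Theorem~\ref{THM:CUR}, scale-invariance of rank, Weyl's inequality, and the span argument for the de-duplicated index sets — are routine.
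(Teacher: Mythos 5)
Your proposal is correct and follows essentially the same route as the paper: reduce to $\rank(C)=\rank(R)=k$ via Theorem~\ref{THM:CUR}, apply the Rudelson--Vershynin sampling estimate (the paper's Proposition~\ref{THM_Rudelson}) to the rescaled sampled matrices, use $\eps<\kappa(A)^{-1}$ together with a Weyl-type eigenvalue bound to force the $k$-th singular value of the sample to be positive, and exploit independence of the row and column draws to square the success probability. Your explicit verification of the rank-one-average structure and the de-duplication/span argument only fills in details the paper delegates to \cite{Rudelson_2007} and to the remark following Theorem~\ref{THM:CUR}.
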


The proof of Theorem \ref{THM:ColRowChnM} is provided in Section \ref{SEC:ProofSampling}.

It should be noted that Theorem \ref{THM:ColRowChnM} utilizes the equivalent condition \eqref{COND:Ranks} of Theorem \ref{THM:CUR} which allows for a somewhat faster algorithm to sample columns and rows. Indeed, since we may check the ranks of $C$ and $R$ separately, we are able to choose columns and rows independently of each other and still guarantee an exact factorization.  One could lessen the sampling complexity of columns by first choosing rows and then choosing columns according to the stable rank of the row matrix $R$; however, this would require sequential sampling and would take more time.

Also note that the sampling complexity in Theorem \ref{THM:ColRowChnM} ostensibly depends on the stable rank of $A$, which is a reduction from most sampling methods for CUR approximations.  However, our assumption on $\eps$ implies that $\frac{r}{\eps^4}\geq k$, in which case our sampling complexity is at least
$\frac{k}{\delta}\log\left(\frac{k}{\delta}\right)$. 
Most results in the CUR approximation literature involving our choice of $U^\dagger$ (e.g., \cite{DKMIII}) require sampling $\frac{k}{\eps^\alpha\delta}\log(\frac{k}{\eps^\alpha\delta})$ rows and columns for some $\alpha$ where the $\eps$ is the same as ours.  Thus our sampling bound could not be derived from the existing ones without being of higher order.  There are more complicated choices for the matrix $U$ in CUR which yield lower sampling complexity; for example Boutsidis and Woodruff \cite{BoutsidisOptimalCUR} provide guarantees for sampling $O(k/\eps)$ columns and rows, but their CUR approximation is much more complicated than ours.  Remark \ref{REM:SamplingOrders} contains further discussion of our sampling orders.

\section{Stability of Column/Row Sampling}\label{SEC:Stability}

\subsection{Randomized Sampling}
While Theorem \ref{THM:ColRowChnM} is somewhat readily obtained from a previous analysis of Rudelson and Vershynin \cite{Rudelson_2007}, we extend their proof to illustrate that sampling with replacement is stable under perturbations of the probabilities, and moreover our analysis gives quantitative measures of said stability.  This brings us to our main stability theorem about exact CUR decompositions whose proof may be found in Section \ref{SEC:ProofStability}.
\begin{theorem}\label{THM:StableCUR}
Let $A\in\K^{m\times n}$ be fixed and have stable rank $r$ and rank $k$.  Suppose that $\widetilde{p},\widetilde{q}$ are probability distributions satisfying $\widetilde{p}_j\geq\alpha_j^2p_j^\textnormal{col}$ and $\widetilde{q_i}\geq\beta_i^2q_i^\textnormal{row}$ for all $i\in[m]$ and $j\in[n]$ for some constants $\alpha_i,\beta_i>0$ (with the convention that $\alpha_i=1$ if $A(:,j)=0$ and $\beta_i=1$ if $A(i,:)=0$).  Let $\alpha:=\min\alpha_i$, $\beta:=\min\beta_i$, and let $\gamma:=\min\{\alpha,\beta\}$.  Let $\delta\in(0,1)$ be given, and let $0<\eps<\min\{\kappa(A)^{-1},\delta^{-\frac14}\sqrt{2\gamma}\}$.
Let $d_1\in[m],d_2\in[n]$ satisfy 
\[ d_1,d_2\gtrsim \left(\frac{r}{\eps^4\delta}\right)\log\left(\frac{r}{\eps^4\delta}\right). \]
Choose $I\subset[m]$ by sampling $d_1$ rows of $A$ independently with replacement according to probabilities $\widetilde{q}_i$ and choose $J\subset[n]$ by sampling $d_2$ columns of $A$ independently with replacement according to $\widetilde{p}_i$.  Set $R=A(I,:)$, $C=A(:,J)$, and $U=A(I,J)$.  Then with probability at least $(1-2\exp(-\frac{c\alpha^2}{\delta}))(1-2\exp(-\frac{c\beta^2}{\delta}))$, the following hold:
\[ \rank(U) = k\;\;\text{and}\;\;A=CU^\dagger R.\]
Moreover, the conclusion of the theorem also holds if we take $I_0$ and $J_0$ to be the indices of $I$ and $J$ above without repeated entries.
\end{theorem}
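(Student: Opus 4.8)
The plan is to reduce the claim to a statement about the smallest singular value of the sampled column matrix $C$ (and symmetrically for $R$), and then to track how reweighting the sampling probabilities from $p^{\textnormal{col}}$ to $\widetilde p$ affects the concentration estimate. By Theorem \ref{THM:CUR}, the conclusion $A = CU^\dagger R$ follows once we show $\rank(C) = \rank(R) = k$; since $C$ and $R$ are sampled independently, it suffices to prove each rank statement separately with the stated probability and multiply. I would therefore concentrate on $C$. Writing $A = W\Sigma V^*$, the matrix $C = A(:,J)$ has the same rank as $V_k^* S$, where $S$ is the (scaled) selection/sketching matrix encoding the $d_2$ sampled columns; so $\rank(C) = k$ is equivalent to $V_k^* S$ having full row rank $k$, i.e. to $\sigma_{\min}(V_k^* S) > 0$, and more robustly to $\|V_k^* S S^* V_k - I_k\| < 1$. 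The strategy is to express $V_k^* S S^* V_k$ as a sum of $d_2$ independent rank-one random matrices and apply the Rudelson–Vershynin sampling estimate (the same engine behind Theorem \ref{THM:ColRowChnM}), but with the importance weights recomputed for the distribution $\widetilde p$ rather than $p^{\textnormal{col}}$.

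The key computation is the following. If column $j$ is drawn with probability $\widetilde p_j$, the natural unbiased estimator uses the summand $\frac{1}{\widetilde p_j}(V_k^* e_j)(V_k^* e_j)^*$ (suitably normalized), and the quantity controlling the Rudelson–Vershynin bound is $\max_j \frac{\|V_k^* e_j\|_2^2}{\widetilde p_j}$ together with the operator norm of the expectation. Using the hypothesis $\widetilde p_j \ge \alpha_j^2 p_j^{\textnormal{col}} = \alpha_j^2 \|A(:,j)\|_2^2/\|A\|_F^2$ and the elementary bound $\|V_k^* e_j\|_2^2 \le \|A(:,j)\|_2^2/\sigma_k(A)^2$ (valid with the convention handling zero columns), one gets
\[
\max_j \frac{\|V_k^* e_j\|_2^2}{\widetilde p_j} \;\le\; \frac{1}{\alpha^2}\cdot\frac{\|A\|_F^2}{\sigma_{\min}(A)^2} \;=\; \frac{1}{\alpha^2}\,r\,\kappa(A)^2 .
\]
So the only effect of the perturbed probabilities is to inflate the relevant "coherence" parameter by $\alpha^{-2}$. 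Feeding this into the Rudelson–Vershynin deviation inequality exactly as in the proof of Theorem \ref{THM:ColRowChnM}, the sampling complexity threshold becomes $d_2 \gtrsim \frac{r}{\alpha^2\eps^4\delta}\log(\cdots)$ and the failure probability becomes $2\exp(-c\alpha^2/\delta)$; but since $\alpha \le 1$ (it multiplies a probability that must not exceed the original times a constant… more precisely because $\sum\widetilde p_j = \sum p_j = 1$ forces some $\alpha_j \le 1$, hence $\alpha\le 1$), the stated hypothesis $d_2 \gtrsim \frac{r}{\eps^4\delta}\log(\frac{r}{\eps^4\delta})$ already covers the inflated threshold — this is why the theorem can keep the same $d_1,d_2$ bound while paying in the probability exponent. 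The constraint $\eps < \delta^{-1/4}\sqrt{2\gamma}$ is what is needed to keep the relevant error term (which scales like $\eps^4\delta/\gamma^2$ or similar) below the $1$ needed for the $\|V_k^*SS^*V_k - I\| < 1$ conclusion; I would extract its precise form by chasing constants through the Rudelson–Vershynin estimate.

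Repeating the argument verbatim for rows, with $V_k$ replaced by $W_k$, $p^{\textnormal{col}}$ by $q^{\textnormal{row}}$, and $\alpha$ by $\beta$, yields $\rank(R) = k$ with probability at least $1 - 2\exp(-c\beta^2/\delta)$. Independence of the column and row draws gives the product probability in the statement, and Theorem \ref{THM:CUR} then delivers $\rank(U) = k$ and $A = CU^\dagger R$. Finally, passing to the de-duplicated index sets $I_0, J_0$ changes nothing: $\rank(A(:,J_0)) = \rank(A(:,J))$ since removing repeated columns does not change the column space, so condition \eqref{COND:Ranks} of Theorem \ref{THM:CUR} still holds. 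I expect the main obstacle to be purely bookkeeping: isolating the precise universal constants so that (a) the perturbed threshold is genuinely dominated by the unperturbed $d_1,d_2$ bound, and (b) the upper bound on $\eps$ comes out exactly as $\min\{\kappa(A)^{-1}, \delta^{-1/4}\sqrt{2\gamma}\}$ — in other words, re-deriving the Rudelson–Vershynin estimate with the weights carried symbolically rather than citing it as a black box, which is exactly the "extend their proof" step the authors allude to just before the theorem.
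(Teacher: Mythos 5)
Your overall reduction matches the paper's skeleton: show $\rank(C)=\rank(R)=k$ separately, multiply the two independent success probabilities, invoke Theorem \ref{THM:CUR}, and note that removing repeated indices changes neither column nor row space. Your technical target, however, is different from the paper's: you propose to show $\|V_k^*SS^*V_k-I_k\|_2<1$ with the coherence-type bound $\max_j\|V_k^*e_j\|_2^2/\widetilde p_j\le r\kappa(A)^2/\alpha^2$ (essentially the computation of Lemma \ref{LEM:ColLev2}), whereas the paper applies the Rudelson--Vershynin estimate to the rescaled rows/columns of $A$ itself: in Theorem \ref{THM:Stability} the random vector is $y=\frac{1}{\sqrt{\widetilde q_i}}A(i,:)$, so $\E(y\otimes y)=A^*A$, $\|y\|_2\le\sqrt r/\alpha$, and one concludes $\|A^*A-\hat R^*\hat R\|_2\le\frac{\eps^2}{2}\|A\|_2^2<\sigma_k(A)^2$ (this is where $\eps<\kappa(A)^{-1}$ is used), forcing $\rank(R)=k$ exactly as in Theorem \ref{THM:ColRowChnM}.

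The genuine gap is in your bookkeeping, which is exactly where the theorem's content lies. You assert that the perturbed probabilities inflate the sampling threshold to $d_2\gtrsim\frac{r}{\alpha^2\eps^4\delta}\log(\cdots)$ and that, since $\alpha\le1$, the stated hypothesis $d_2\gtrsim\frac{r}{\eps^4\delta}\log\left(\frac{r}{\eps^4\delta}\right)$ ``already covers the inflated threshold.'' This is backwards: $\alpha\le1$ makes $\frac{r}{\alpha^2\eps^4\delta}\ge\frac{r}{\eps^4\delta}$, so the stated hypothesis does \emph{not} imply the inflated one, and if the argument truly needed the inflated threshold the theorem as stated would not follow from your reasoning. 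The correct mechanism (the paper's) is to keep $d_1,d_2$ at the unperturbed order and let the factor $1/\alpha$ inflate the parameter $a=C_0\sqrt{\frac{\log d}{d}}\,M$ of Theorem \ref{THM:RVLargeNumbers}: with $t=\eps^2/2$ the failure probability becomes $2\exp(-ct^2/a^2)\le2\exp(-c\alpha^2/\delta)$, and the hypothesis $\eps<\delta^{-\frac14}\sqrt{2\gamma}$ is precisely what guarantees $a\le\frac{\eps^2\sqrt{\delta}}{2\alpha}<1$ so that the lemma applies. Your proposal never locates where this $\eps$ constraint enters (you defer it to constant-chasing), and in your projected version the boundedness parameter is $M=\sqrt r\,\kappa(A)/\alpha$ rather than $\sqrt r/\alpha$, so the condition $a<1$ is \emph{not} implied by $\eps<\min\{\kappa(A)^{-1},\delta^{-\frac14}\sqrt{2\gamma}\}$ without further argument (the extra $\kappa(A)$ can be absorbed in the exponent because you may take a constant deviation $t$, but not in the $a<1$ requirement). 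Either switch to the paper's choice of random vector, or redo the weighted Rudelson--Vershynin estimate for your version and determine which constraint on $\eps$ it actually needs; as written, the quantitative heart of the proof is missing, and the one explicit justification you give for keeping the stated $d_1,d_2$ is reversed.
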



\begin{remark}\label{REM:SamplingOrders}
Note that the assumptions on $\eps$ and $\delta$ imply certain relations, e.g.,
\[ \frac{r}{\eps^4\delta}\log\left(\frac{r}{\eps^4\delta}\right) = \Omega\left(\frac{k\kappa(A)^2}{\delta}\log\left(\frac{k\kappa(A)}{\delta}\right)\right), \]
and if we additionally assume that $\eps=\Theta(\kappa(A)^{-1})$, then
\[ \frac{r}{\eps^4\delta}\log\left(\frac{r}{\eps^4\delta}\right) = O\left(\frac{k\kappa(A)^4}{\delta}\log\left(\frac{k\kappa(A)}{\delta}\right)\right). \]


\end{remark}

\begin{remark}Note that this sampling complexity of essentially $k\log k$ for both columns and rows is better than previous results which required first row sampling of order $k^2\log k$ and then column sampling of order $|R|^2\log|R|$ with $|R|$ being the number of rows selected (e.g., \cite{DKMIII}).  The observation of Theorem \ref{THM:CUR}\eqref{COND:Ranks} allows sampling to be done independently and thus achieve lower complexity.\end{remark}

\subsection{Corollaries}

Theorem \ref{THM:StableCUR} admits many extensions.  First, we illustrate its conclusion for uniform and leverage score sampling.  The following essentially states that uniformly sampling rows and columns of $A$ still yields $A=CU^\dagger R$ with high probability in a certain sense; this is the first result of this kind that does not use any additional assumptions about the matrix $A$ such as coherency (e.g. \cite{DemanetWu}).

\begin{corollary}\label{COR:Uniform}
Let $A\in\K^{m\times n}$ have stable rank $r$ and rank $k$.  Let $\alpha:= \frac{1}{\sqrt{m}}\min\{\frac{\|A\|_F}{\|A(i,:)\|_2}:A(i,:)\neq0\}$ and $\beta:=\frac{1}{\sqrt{n}}\min\{\frac{\|A\|_F}{\|A(:,j)\|_2}:A(:,j)\neq0\}$, with $\gamma:=\min\{\alpha,\beta\}$.  Let $\delta\in(0,1)$ be given, and let $0<\eps<\min\{\kappa(A)^{-1},\delta^{-\frac14}\sqrt{2\gamma}\}$.  
Then sampling $d_1,d_2\gtrsim \left(\frac{r}{\eps^4\delta}\right)\log\left(\frac{r}{\eps^4\delta}\right)$ columns and rows of $A$ uniformly with replacement yields $C,U,$ and $R$ such that with probability at least $(1-2\exp(-\frac{c\alpha^2}{\delta}))(1-2\exp(-\frac{c\beta^2}{\delta}))$, 
\[\rank(U) = k\;\;\text{and}\;\;A=CU^\dagger R.\]
\end{corollary}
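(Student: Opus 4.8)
The plan is to derive Corollary \ref{COR:Uniform} as a direct specialization of Theorem \ref{THM:StableCUR}. The key observation is that uniform sampling overestimates the column-length and row-length distributions up to a computable factor, so the hypotheses of Theorem \ref{THM:StableCUR} are met with explicit constants $\alpha_i,\beta_i$. Concretely, for a nonzero column $A(:,j)$ we have $p_j^{\textnormal{unif}} = \frac1n$ and $p_j^{\textnormal{col}} = \|A(:,j)\|_2^2/\|A\|_F^2$, so
\[
\frac{p_j^{\textnormal{unif}}}{p_j^{\textnormal{col}}} = \frac{\|A\|_F^2}{n\,\|A(:,j)\|_2^2} \geq \frac1n\Bigl(\min_{A(:,\ell)\neq 0}\frac{\|A\|_F}{\|A(:,\ell)\|_2}\Bigr)^2 = \beta^2,
\]
and likewise $q_i^{\textnormal{unif}}/q_i^{\textnormal{row}} \geq \alpha^2$ for every nonzero row. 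Thus setting $\widetilde p_j = p_j^{\textnormal{unif}}$, $\widetilde q_i = q_i^{\textnormal{unif}}$, $\alpha_i \equiv \alpha$, and $\beta_i \equiv \beta$ (with the stated convention that the constant is $1$ on a zero row or column, which is consistent since a zero column never needs to be sampled to span the column space) satisfies the hypotheses $\widetilde p_j \geq \alpha_j^2 p_j^{\textnormal{col}}$ and $\widetilde q_i \geq \beta_i^2 q_i^{\textnormal{row}}$ of Theorem \ref{THM:StableCUR}.

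With that identification in place, the remaining steps are purely bookkeeping. First I would check that the quantities $\alpha$ and $\beta$ as defined in the statement of Corollary \ref{COR:Uniform} coincide exactly with $\min_i\alpha_i$ and $\min_j\beta_j$ from the reduction above, so that $\gamma = \min\{\alpha,\beta\}$ matches, and the constraint $0 < \eps < \min\{\kappa(A)^{-1},\delta^{-1/4}\sqrt{2\gamma}\}$ is precisely the one required by Theorem \ref{THM:StableCUR}. Then the sampling-complexity bound $d_1,d_2 \gtrsim (r/\eps^4\delta)\log(r/\eps^4\delta)$ and the success probability $(1-2\exp(-c\alpha^2/\delta))(1-2\exp(-c\beta^2/\delta))$ are inherited verbatim. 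Invoking Theorem \ref{THM:StableCUR} then yields $\rank(U)=k$ and $A=CU^\dagger R$ with the claimed probability, completing the proof.

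There is essentially no obstacle here beyond being careful with the zero-row/zero-column edge cases and making sure the definitions of $\alpha$ and $\beta$ are transcribed so that they match the general theorem's constants. One point worth a sentence of comment: since $\|A(:,j)\|_2 \leq \|A\|_F$ always, we have $\beta \leq 1/\sqrt{n}$ (and similarly $\alpha \leq 1/\sqrt{m}$), so $\gamma$ is small and the exponent $c\gamma^2/\delta$ in the failure probability is correspondingly weak unless the column and row norms of $A$ are fairly flat — this is exactly the intuitive price one pays for uniform sampling, and it is why the corollary is most useful for matrices that are, in a precise sense, incoherent in their column and row norms. The bound is vacuous (probability $\leq 0$) unless $\delta$ is taken small relative to $\gamma^2$, which is consistent with the $\eps$ constraint already forcing $\eps^4\delta < 2\gamma$. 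No new estimates are needed; the entire content is the elementary inequality relating $p_j^{\textnormal{unif}}$ to $p_j^{\textnormal{col}}$ together with the already-established Theorem \ref{THM:StableCUR}.
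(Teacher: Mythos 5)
Your proposal is correct and follows essentially the same route as the paper's own proof: observe that $p_j^{\textnormal{unif}}\geq\beta^2 p_j^{\textnormal{col}}$ and $q_i^{\textnormal{unif}}\geq\alpha^2 q_i^{\textnormal{row}}$ with the stated constants and then invoke Theorem \ref{THM:StableCUR} verbatim. One trivial slip in your closing aside: the constraint $\eps<\delta^{-1/4}\sqrt{2\gamma}$ forces $\eps^4\delta<4\gamma^2$, not $\eps^4\delta<2\gamma$, but this does not affect the argument.
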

\begin{proof}
With the definitions of $\alpha,\beta$, we have that $\widetilde{q_i} = q_i^\textnormal{unif}=\frac1m \geq \alpha_i^2q_i^\textnormal{col}$ where $\alpha_i^2 = \frac1m \frac{\|A\|_F^2}{\|A(i,:)\|_2^2}$.  The analogous statement holds for $\widetilde{p}_j=p_j^\textnormal{unif}=\frac1n$, whereby an appeal to Theorem \ref{THM:StableCUR} yields the desired conclusion.
\end{proof}

\begin{remark}\label{REM:BoundedBelow}
It should be noted that the statement of Corollary \ref{COR:Uniform} can be generalized somewhat.  We will not write the full statement, but suppose $\widetilde{p}_j\geq\eps_0^2>0$ and $\widetilde{q}_i\geq\eps_1^2>0$, then defining $\alpha:=\eps_1^2\min\{\frac{\|A\|_F}{\|A(i,:)\|_2}:A(i,:)\neq0\}$ and $\beta$ analogously using $\eps_0$ and the column norms.  With these definitions in hand, the rest of the conclusion of Corollary \ref{COR:Uniform} holds for these parameters $\alpha$ and $\beta$.
\end{remark}

\begin{remark}
Embedded in the parameters of Corollary \ref{COR:Uniform} is an indication of the tradeoff between the sparsity of rows and columns of $A$ and the required sampling order, which one would expect to have in order to guarantee success of uniform sampling.  Indeed, consider the extreme case when $A$ consists of a single nonzero entry.  In this case, $\alpha=\frac{1}{\sqrt{m}}$, and the requirement on $\eps$ is such that approximately $m$ rows need to be sampled to guarantee that the single meaningful one is selected, which is the correct sampling order given that the rows are chosen uniformly at random. 
\end{remark}

The following shows the exact decomposition in the case of leverage score sampling.
\begin{lemma}\label{LEM:ColLev}
For $A\in\K^{m\times n}$ having rank $k$ and stable rank $r$, 
\[ p_j^{\textnormal{lev},k}\geq \frac{r}{k}p_j^\textnormal{col},\quad j\in[n].\]
The same inequality holds for $i\in[m]$ when comparing $q_i^\textnormal{row}$ with $q_i^{\textnormal{lev},k}$.
\end{lemma}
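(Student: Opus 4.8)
The plan is to unwind the definitions and reduce the claimed inequality to a single statement relating the Frobenius-normalized column norms to the leverage scores through the top singular value. Writing out the definitions, the inequality $p_j^{\textnormal{lev},k}\geq \frac{r}{k}p_j^{\textnormal{col}}$ becomes, after multiplying through by $k$ and substituting $r=\|A\|_F^2/\|A\|_2^2$,
\[ \|V_k(j,:)\|_2^2 \;\geq\; \frac{\|A(:,j)\|_2^2}{\|A\|_2^2}, \]
so it suffices to establish this last inequality.

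To prove it I would use the thin SVD $A = W_k\Sigma_k V_k^*$ with $\Sigma_k=\textnormal{diag}(\sigma_1,\dots,\sigma_k)$; since $k=\rank(A)$ here, all listed singular values are nonzero and $V_k$ genuinely captures the full row space of $A$. Then the $j$-th column of $A$ is $A(:,j) = W_k\Sigma_k (V_k(j,:))^*$, and because $W_k$ has orthonormal columns,
\[ \|A(:,j)\|_2^2 \;=\; \|\Sigma_k (V_k(j,:))^*\|_2^2 \;=\; \sum_{i=1}^k \sigma_i^2\,|V_k(j,i)|^2. \]
Bounding each $\sigma_i\leq\sigma_1=\|A\|_2$ yields $\|A(:,j)\|_2^2 \leq \|A\|_2^2\sum_{i=1}^k|V_k(j,i)|^2 = \|A\|_2^2\,\|V_k(j,:)\|_2^2$, which is exactly the required inequality after dividing by $\|A\|_2^2$.

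The row statement follows by applying the identical argument to $A^*$: one has $A^* = V_k\Sigma_k W_k^*$, the leverage scores of the rows of $A$ are the leverage scores of the columns of $A^*$, and $\textnormal{st.rank}(A^*)=\textnormal{st.rank}(A)$ since $\|A^*\|_F=\|A\|_F$ and $\|A^*\|_2=\|A\|_2$.

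There is no substantial obstacle in this argument; the only point requiring care is the bookkeeping with the thin SVD together with the observation that, when $k=\rank(A)$, the factor $V_k$ appearing in the leverage-score definition carries the entire row space of $A$, so that expressing $A(:,j)$ through $V_k$ loses nothing and the singular-value bound $\sigma_i\leq\sigma_1$ is the only estimate needed.
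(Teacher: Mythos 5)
Your proof is correct and follows essentially the same route as the paper's: write $A(:,j)$ via the compact SVD, use $\|A(:,j)\|_2^2=\sum_i\sigma_i^2|V_k(j,i)|^2\leq\sigma_1^2\|V_k(j,:)\|_2^2$, and divide through using the definition of stable rank. The row case by passing to $A^*$ likewise matches the paper's remark that the row argument is identical.
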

\begin{proof}
Let $j\in[n]$ be fixed but arbitrary, and let $A=W\Sigma V^*$ be the full SVD of $A$ and $A=W_k\Sigma_kV_k^*$ be its compact SVD.  By unitary invariance, we have
\[
    \|A(:,j)\|_2^2  = \|W\Sigma V^*(:,j)\|_2^2 = \|\Sigma V^*(:,j)\|_2^2.
\]
By the block structure of $\Sigma$, the latter quantity is $\|\Sigma_kV_k^*(:,j)\|_2^2 = \sum_{i}|\sigma_i|^2|V_k^*(i,j)|^2$ which is at most $\sigma_1^2\|V_k^*(:,j)\|_2^2$. The proof is complete upon dividing by $\|A\|_F^2$ and recalling that $p_j^{\textnormal{lev},k} = \frac1k\|V_k^*(:,j)\|_2^2$ and the definition of stable rank.  The statement and proof for the row leverage scores are identical.
\end{proof}

\begin{corollary}\label{COR:Leverage}
Let $A\in\K^{m\times n}$ have rank $k$ and stable rank $r$. Set $\alpha^2=\beta^2=\frac{r}{k}.$  Then with the notations and assumptions of Theorem \ref{THM:StableCUR}, sampling $d_1,d_2\gtrsim \left(\frac{r}{\eps^4\delta}\right)\log\left(\frac{r}{\eps^4\delta}\right)$ columns and rows of $A$ independently with replacement according to leverage scores, $p_j^{\textnormal{lev},k}$ and $q_i^{\textnormal{lev},k}$ yields 
\[\rank(U) = k\;\;\text{and}\;\;A=CU^\dagger R\]
with probability at least $(1-2\exp(-\frac{cr}{k\delta}))^2$.
\end{corollary}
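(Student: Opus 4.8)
The plan is to obtain Corollary \ref{COR:Leverage} as a direct application of Theorem \ref{THM:StableCUR}, with Lemma \ref{LEM:ColLev} supplying the required comparison between the leverage-score distributions and the column/row-length distributions. So the proof will amount to verifying the hypotheses of Theorem \ref{THM:StableCUR} and then reading off its conclusion.

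First I would observe that $p^{\textnormal{lev},k}$ and $q^{\textnormal{lev},k}$ are genuine probability distributions: since $V_k\in\K^{n\times k}$ has orthonormal columns, $\sum_{j=1}^n\|V_k(j,:)\|_2^2 = \|V_k\|_F^2 = k$, so $\sum_j p_j^{\textnormal{lev},k}=1$, and likewise for the rows via $W_k$. Hence we may take $\widetilde p = p^{\textnormal{lev},k}$ and $\widetilde q = q^{\textnormal{lev},k}$ in Theorem \ref{THM:StableCUR}. Next, Lemma \ref{LEM:ColLev} gives $\widetilde p_j = p_j^{\textnormal{lev},k}\geq \frac rk\, p_j^{\textnormal{col}}$ for every $j\in[n]$ and $\widetilde q_i = q_i^{\textnormal{lev},k}\geq \frac rk\, q_i^{\textnormal{row}}$ for every $i\in[m]$, so the hypothesis of Theorem \ref{THM:StableCUR} holds with $\alpha_j^2=\beta_i^2=\frac rk$ at the indices corresponding to nonzero columns and rows; at zero columns/rows we invoke the stated convention $\alpha_j=1$, $\beta_i=1$, and since $r\leq k$ we have $\sqrt{r/k}\leq 1$, so these indices do not lower the relevant minima. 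Therefore $\alpha=\min_j\alpha_j=\sqrt{r/k}$, $\beta=\min_i\beta_i=\sqrt{r/k}$, and $\gamma=\min\{\alpha,\beta\}=\sqrt{r/k}$, matching the choice $\alpha^2=\beta^2=\frac rk$ in the statement; the restriction $0<\eps<\min\{\kappa(A)^{-1},\delta^{-\frac14}\sqrt{2\gamma}\}$ is precisely one of the ``notations and assumptions of Theorem \ref{THM:StableCUR}'' being imported, so nothing further needs to be checked there.

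Finally, since the sampling orders $d_1,d_2\gtrsim\left(\frac{r}{\eps^4\delta}\right)\log\left(\frac{r}{\eps^4\delta}\right)$ coincide with those in Theorem \ref{THM:StableCUR}, that theorem yields $\rank(U)=k$ and $A=CU^\dagger R$ with probability at least
\[ \left(1-2\exp\left(-\frac{c\alpha^2}{\delta}\right)\right)\left(1-2\exp\left(-\frac{c\beta^2}{\delta}\right)\right) = \left(1-2\exp\left(-\frac{cr}{k\delta}\right)\right)^2, \]
which is the asserted bound. There is no substantive obstacle here — all the analytic work lives in Lemma \ref{LEM:ColLev} and Theorem \ref{THM:StableCUR}, which are already established; the only points demanding a moment's care are confirming that the leverage scores sum to $1$ and dispatching the zero-column/zero-row indices via the convention, which is where $r\leq k$ is used.
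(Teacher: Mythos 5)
Your proposal is correct and follows exactly the paper's route: applying Lemma \ref{LEM:ColLev} to verify the hypotheses of Theorem \ref{THM:StableCUR} with $\alpha^2=\beta^2=\frac{r}{k}$ and reading off the probability bound. The extra checks you supply (that the leverage scores sum to one, and that the zero-column/zero-row convention together with $r\leq k$ does not lower the minima) are sound and merely make explicit what the paper leaves implicit.
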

\begin{proof}
Apply the conclusion of Lemma \ref{LEM:ColLev} in the statement of Theorem \ref{THM:StableCUR}.
\end{proof}

\subsection{Random Sampling of Noisy Matrices}

We now turn to analyzing what happens when we observe a noisy version of a low-rank matrix; i.e., we see $\widetilde{A}=A+E$ where $A$ is low rank, but $E$ is some noise matrix.  The primary question we ask is: if we select columns and rows $\widetilde{C}=\widetilde{A}(:,J)$ and $\widetilde{R}=\widetilde{A}(I,:)$ via a probability distribution determined by $\widetilde{A}$, can we succeed in obtaining a valid CUR decomposition of the underlying low-rank part, $A$.  That is, if $C=A(:,J)$ and $R=A(I,:)$, do we have $A=CU^\dagger R$?

First, regard that Corollary \ref{COR:Uniform} implies an affirmative answer to this question as uniform sampling doesn't see the difference between $\widetilde{A}$ and $A$.  However, we may obtain another kind of stability from Theorem \ref{THM:StableCUR} in this vein.

Suppose that $\widetilde{p}_j^\textnormal{col} = \frac{\|\widetilde{A}(:,j)\|_2^2}{\|\widetilde{A}\|_F^2},$ and similarly for $\widetilde{q}_i^\textnormal{row}$.  Then via the same calculation as was done for the stable rank of $\widetilde{A}$ compared with that of $A$, we have 
\begin{equation}\label{EQN:qtilde} \widetilde{q}_i^\textnormal{row} = \frac{\|\widetilde{A}(i,:)\|_2^2}{\|\widetilde{A}\|_F^2} \geq \left(\frac{1-\frac{\|E(i,:)\|_2}{\|A(i,:)\|_2}}{1+\frac{\|E\|_F}{\|A\|_F}}\right)^2\frac{\|A(i,:)\|_2^2}{\|A\|_F^2}=:\beta_i^2q_i^\textnormal{row},\end{equation}
\begin{equation}\label{EQN:ptilde} \widetilde{p}_j^\textnormal{col} \geq \left(\frac{1-\frac{\|E(:,j)\|_2}{\|A(:,j)\|_2}}{1+\frac{\|E\|_F}{\|A\|_F}}\right)^2p_i^\textnormal{col} =: \alpha_j^2p_j^\textnormal{col}. \end{equation}
In the event that a row or column of $A$ is identically 0, then we set $\alpha_j$ or $\beta_i$ to be 1 by convention.  Note that to apply Theorem \ref{THM:StableCUR}, we must require that no column or row of $\widetilde{A}$ can be 0 when the corresponding column or row of $A$ is nonzero.  Otherwise, there would be some $i$ or $j$ for which $\beta_i=0$ or $\alpha_j=0$, whereby the result would not apply.

Now we can conclude that sampling columns and rows of $\widetilde{A}$ according to column and row lengths can ensure that the underlying CUR decomposition is valid for $A$ as long as $\eps,\delta$ are small enough and the obstacle mentioned above is not present. 

\begin{corollary}\label{COR:NoiseStability}
Let $\widetilde{A}=A+E$ with $A$ having rank $k$ and stable rank $r$, and suppose that no row or column of $\widetilde{A}$ is 0 unless the corresponding row or column of $A$ is 0.  Let $\widetilde{q}_i^\textnormal{row}, \widetilde{p}_j^\textnormal{col},\alpha_j,\beta_i$ be as in \eqref{EQN:qtilde}, \eqref{EQN:ptilde}, and set $\alpha=\min \alpha_j$ and $\beta=\min\beta_i$.  Then with the notations and assumptions of Theorem \ref{THM:StableCUR}, sampling columns of $\widetilde{A}$ independently with replacement according to the given probabilities yields $I$ and $J$ such that if $C=A(:,J), U=A(I,J),$ and $R=A(I,:)$, then
\[\rank(U) = k\;\;\text{and}\;\;A=CU^\dagger R\]
with high probability.
\end{corollary}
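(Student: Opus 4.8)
The plan is to obtain Corollary~\ref{COR:NoiseStability} as an essentially immediate consequence of Theorem~\ref{THM:StableCUR}, since the only hypotheses that theorem places on the sampling distributions are the pointwise domination inequalities, and those have already been recorded in \eqref{EQN:qtilde} and \eqref{EQN:ptilde}. First I would observe that the distributions $\widetilde p^\textnormal{col}$ and $\widetilde q^\textnormal{row}$ built from the column and row lengths of $\widetilde A$ satisfy $\widetilde q_i^\textnormal{row}\geq\beta_i^2 q_i^\textnormal{row}$ and $\widetilde p_j^\textnormal{col}\geq\alpha_j^2 p_j^\textnormal{col}$ for all $i\in[m]$, $j\in[n]$, with $\alpha_j,\beta_i$ exactly as defined in \eqref{EQN:qtilde}, \eqref{EQN:ptilde} (these estimates being the same triangle-inequality computation already used for the stable rank). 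This is precisely the hypothesis imposed on $\widetilde p,\widetilde q$ in Theorem~\ref{THM:StableCUR}.

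The only content beyond quoting \eqref{EQN:qtilde} and \eqref{EQN:ptilde} is to verify that the constants $\alpha_j,\beta_i$ are strictly positive, so that $\alpha=\min_j\alpha_j$, $\beta=\min_i\beta_i$, and $\gamma=\min\{\alpha,\beta\}$ are positive and the probability bound in Theorem~\ref{THM:StableCUR} is nondegenerate. For an index $i$ with $A(i,:)=0$ we use the stated convention $\beta_i=1>0$; for an index $i$ with $A(i,:)\neq 0$, positivity of $\beta_i=\bigl(1-\|E(i,:)\|_2/\|A(i,:)\|_2\bigr)/\bigl(1+\|E\|_F/\|A\|_F\bigr)$ amounts to $\|E(i,:)\|_2<\|A(i,:)\|_2$, which is part of the hypotheses carried over from Theorem~\ref{THM:StableCUR} and in particular is consistent with (indeed forces) the standing assumption that no row of $\widetilde A$ vanishes unless the corresponding row of $A$ does. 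The same reasoning applies columnwise to the $\alpha_j$. Consequently $\alpha,\beta>0$ and $\eps$ may be taken in the admissible window $0<\eps<\min\{\kappa(A)^{-1},\delta^{-1/4}\sqrt{2\gamma}\}$.

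With these facts in hand I would invoke Theorem~\ref{THM:StableCUR} directly: sampling $d_1,d_2\gtrsim\bigl(\tfrac{r}{\eps^4\delta}\bigr)\log\bigl(\tfrac{r}{\eps^4\delta}\bigr)$ rows and columns of $\widetilde A$ independently with replacement according to $\widetilde q^\textnormal{row}$ and $\widetilde p^\textnormal{col}$ produces index sets $I,J$ for which, setting $C=A(:,J)$, $R=A(I,:)$, $U=A(I,J)$, one has $\rank(U)=k$ and $A=CU^\dagger R$ with probability at least $(1-2\exp(-\tfrac{c\alpha^2}{\delta}))(1-2\exp(-\tfrac{c\beta^2}{\delta}))$, which is the asserted "high probability." The point to stress in the write-up is that Theorem~\ref{THM:StableCUR} uses $\widetilde p,\widetilde q$ only to decide \emph{which} indices are drawn, whereas the submatrices it certifies are those of the underlying low-rank $A$; hence the corollary genuinely says that length-sampling of the noisy observation $\widetilde A$ recovers an exact CUR decomposition of $A$. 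The main (and only mild) obstacle is the bookkeeping around zero rows/columns together with the implicit requirement that the per-row and per-column noise be dominated by the corresponding row/column norm of $A$; everything else is a quotation of results already in place.
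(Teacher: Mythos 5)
Your proposal is correct and takes essentially the same route as the paper: the paper's justification of this corollary is exactly the observation that the inequalities \eqref{EQN:qtilde} and \eqref{EQN:ptilde} place the noisy row/column-length distributions in the form $\widetilde q_i\geq\beta_i^2 q_i^{\textnormal{row}}$, $\widetilde p_j\geq\alpha_j^2 p_j^{\textnormal{col}}$ demanded by Theorem \ref{THM:StableCUR}, with the zero row/column convention and the hypothesis that no row or column of $\widetilde A$ vanishes unless the corresponding one of $A$ does ensuring the constants are nondegenerate, after which Theorem \ref{THM:StableCUR} is applied verbatim. Your additional bookkeeping about positivity of $\alpha_j,\beta_i$ is consistent with the paper's reading of ``with the notations and assumptions of Theorem \ref{THM:StableCUR},'' so nothing further is needed.
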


As a practical note on using this corollary, one could try to estimate the signal-to-noise ratio to obtain an estimate for the values $\alpha_j$ and $\beta_i$ in the above expressions.  This would then give an indication of how to choose $\eps,\delta$.  

\begin{remark}One application of Corollary \ref{COR:NoiseStability} is to perturbation bounds given in \cite{HammHuangPer}.  Therein, it was shown that $\|A-\widetilde{C}\widetilde{U}^\dagger\widetilde{R}\|\leq \|A-CU^\dagger R\|+O(\|E\|)$ where $\|\cdot\|$ was any Schatten $p$--norm.  Corollary \ref{COR:NoiseStability} thus implies that with high probability, sampling noisy columns and rows of $\widetilde{A}$ according to their lengths yields $\|A-\widetilde{C}\widetilde{U}^\dagger\widetilde{R}\| = O(\|E\|)$ with high probability (see \cite{HammHuangPer} for more detailed estimates of the error in terms of $\|E\|$).
\end{remark}

\subsection{Deterministic Sampling of Noisy Matrices}
Proposition \ref{Prop: DEIM_NF} shows that the DEIM algorithm recovers a low-rank matrix exactly. Here, we will analyze the stability of DEIM in the vein of the previous subsection and give a quantitative relationship between the underlying low-rank matrix and the noise.

\begin{proposition}
Let $\widetilde{A}=A+E$ with $A$ having rank $k$. Let $A=W_k\Sigma_k V_k^*$ and denote by $\widetilde{A}_k=\widetilde W_k\widetilde \Sigma_k\widetilde V_k^*$ the truncated SVD of $\widetilde{A}$.  Suppose that $I\subset[m],J\subset[n]$ are chosen by DEIM algorithm 
on $\widetilde W_k$ and $\widetilde V_k$ respectively, and set $C=A(:,J)$, $R=A(I,:)$, and $U=A(I,J)$. If $\sigma_k(A)\geq\left(1+2^k\sqrt{\frac{\max\{nk,mk\}}{3}}\right)\|E\|_2 $, then $A=CU^\dagger R$.
\begin{proof}
Let $\widetilde{A}=\widetilde{W}\widetilde{\Sigma}\widetilde{V}^*$ with $\widetilde{W}=\begin{bmatrix}\widetilde{W}_k&\widetilde{W}_{\perp}\end{bmatrix}$, $\widetilde{\Sigma}=\begin{bmatrix} 
\widetilde{\Sigma}_k&0\\
0&\widetilde{\Sigma}_{\perp}
\end{bmatrix}$  and $\widetilde{V}=\begin{bmatrix}\widetilde{V}_k&\widetilde{V}_{\perp}\end{bmatrix}$.  Note that by an inequality due to Weyl (see, e.g., \cite[Corollary 8.6.2]{GolubVanLoan}), $\sigma_k(R)+\|E(I,:)\|_2\geq
\sigma_k(\widetilde{R})=\sigma_k(\widetilde{A}(I,:))$, which is
\begin{eqnarray*}
\sigma_k(\widetilde{W}(I,:)\widetilde{\Sigma}\widetilde{V}^*)&=&\sigma_k(\widetilde{W}(I,:)\widetilde{\Sigma})\\
&=&\sigma_k\left(\begin{bmatrix}\widetilde{W}_k(I,:)\widetilde{\Sigma}_k& \widetilde{W}_{\perp}(I,:)\widetilde{\Sigma}_{\perp}
\end{bmatrix}\right)\\
&\geq&\sigma_k(\widetilde{W}_k(I,:)\widetilde{\Sigma}_k)\\
&\geq& \sigma_k(\widetilde{W}_k(I,:))\sigma_k(\widetilde{A}).
\end{eqnarray*}
Thus, by \cite[Lemma 4.4]{SorensenDEIMCUR}, we have $\sigma_k(R)\geq\sigma_k(\widetilde{W}_k(I,:))\sigma_k(\widetilde{A})-\|E(I,:)\|_2\geq\sqrt{\frac{3}{mk}}\frac{1}{2^k}\sigma_k(\widetilde{A})-\|E(I,:)\|_2$. Similar, we can prove that $\sigma_k(C)\geq\sqrt{\frac{3}{nk}}\frac{1}{2^k}\sigma_k(\widetilde{A})-\|E(:,J)\|_2$. Since  $\sigma_k(A)\geq(1+2^k\sqrt{\frac{\max\{nk,mk\}}{3}})\|E\|_2 $, we have $\sigma_k(R)>0$ and $\sigma_k(C)> 0$ (by utilizing the fact that $\sigma_k(\widetilde{A})\geq\sigma_k(A)-\|E\|_2$. Hence, $\rank(C)=\rank(R)=\rank(A)$, which implies $A=CU^\dagger R$ by Theorem \ref{THM:CUR}.
\end{proof}
\end{proposition}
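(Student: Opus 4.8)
The plan is to reduce the claim to the rank condition of Theorem \ref{THM:CUR} by showing that $\sigma_k(C)>0$ and $\sigma_k(R)>0$, which forces $\rank(C)=\rank(R)=k=\rank(A)$ and hence $A=CU^\dagger R$. The route to this is to compare the $k$-th singular value of the (noisy) row submatrix $\widetilde R=\widetilde A(I,:)$ with that of the (clean) row submatrix $R=A(I,:)$ using a Weyl-type perturbation inequality, and then to exploit the DEIM selection guarantee, which bounds $\|\widetilde W_k(I,:)^{-1}\|$ (equivalently, lower bounds $\sigma_k(\widetilde W_k(I,:))$) in terms of the iteration count $k$.

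First I would fix notation for the full SVD $\widetilde A=\widetilde W\widetilde\Sigma\widetilde V^*$, splitting into the leading $k$ block and its orthogonal complement. Since $\widetilde A$ is a genuine (full-rank) perturbation, I want a clean lower bound on $\sigma_k(\widetilde A(I,:))$: writing $\widetilde A(I,:)=\widetilde W(I,:)\widetilde\Sigma\widetilde V^*$ and using unitary invariance to drop $\widetilde V^*$, then discarding the tail block inside the horizontal concatenation (which can only decrease the $k$-th singular value), I get $\sigma_k(\widetilde A(I,:))\ge\sigma_k(\widetilde W_k(I,:)\widetilde\Sigma_k)\ge\sigma_k(\widetilde W_k(I,:))\sigma_k(\widetilde A)$. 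By Weyl's inequality applied to $\widetilde R=R+E(I,:)$ we have $\sigma_k(R)\ge\sigma_k(\widetilde R)-\|E(I,:)\|_2$, so combining gives $\sigma_k(R)\ge\sigma_k(\widetilde W_k(I,:))\sigma_k(\widetilde A)-\|E(I,:)\|_2$. Now I invoke the DEIM bound — the property recorded just before Proposition \ref{Prop: DEIM_NF}, or equivalently \cite[Lemma 4.4]{SorensenDEIMCUR} — which yields $\sigma_k(\widetilde W_k(I,:))\ge\sqrt{3/(mk)}\,2^{-k}$, hence $\sigma_k(R)\ge\sqrt{3/(mk)}\,2^{-k}\sigma_k(\widetilde A)-\|E(I,:)\|_2$. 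The symmetric argument on columns, using DEIM on $\widetilde V_k$, gives $\sigma_k(C)\ge\sqrt{3/(nk)}\,2^{-k}\sigma_k(\widetilde A)-\|E(:,J)\|_2$.

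The final step is to certify both lower bounds are strictly positive under the hypothesis $\sigma_k(A)\ge\bigl(1+2^k\sqrt{\max\{nk,mk\}/3}\bigr)\|E\|_2$. Using Weyl once more, $\sigma_k(\widetilde A)\ge\sigma_k(A)-\|E\|_2$, and of course $\|E(I,:)\|_2,\|E(:,J)\|_2\le\|E\|_2$. Plugging in, it suffices that $\sqrt{3/(mk)}\,2^{-k}\bigl(\sigma_k(A)-\|E\|_2\bigr)>\|E\|_2$, i.e. $\sigma_k(A)>\bigl(1+2^k\sqrt{mk/3}\bigr)\|E\|_2$, and likewise with $m$ replaced by $n$; taking the max of the two reproduces exactly the stated hypothesis. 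Therefore $\sigma_k(C),\sigma_k(R)>0$, so $\rank(C)=\rank(R)=k$, and Theorem \ref{THM:CUR}\eqref{COND:Ranks}$\Rightarrow$\eqref{COND:CUR} delivers $A=CU^\dagger R$.

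The only genuinely delicate point is the bound $\sigma_k(\widetilde A(I,:))\ge\sigma_k(\widetilde W_k(I,:))\sigma_k(\widetilde A)$ — one must be careful that dropping the orthogonal-complement block from the concatenation $[\widetilde W_k(I,:)\widetilde\Sigma_k\;\;\widetilde W_\perp(I,:)\widetilde\Sigma_\perp]$ is legitimate (it is, since appending columns cannot decrease any singular value) and that the submultiplicative-type estimate $\sigma_k(\widetilde W_k(I,:)\widetilde\Sigma_k)\ge\sigma_k(\widetilde W_k(I,:))\sigma_{\min}(\widetilde\Sigma_k)=\sigma_k(\widetilde W_k(I,:))\sigma_k(\widetilde A)$ is applied with the right inequality direction. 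Everything else is bookkeeping with Weyl's inequality and the off-the-shelf DEIM estimate, so I expect no further obstacles.
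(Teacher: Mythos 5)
Your proposal is correct and follows essentially the same route as the paper's own proof: the block-SVD lower bound $\sigma_k(\widetilde{A}(I,:))\geq\sigma_k(\widetilde{W}_k(I,:))\sigma_k(\widetilde{A})$, Weyl's inequality, the DEIM bound from \cite[Lemma 4.4]{SorensenDEIMCUR}, and the final appeal to Theorem \ref{THM:CUR}\eqref{COND:Ranks} all match the paper step for step. The only (shared, negligible) blemish is that the hypothesis is stated with ``$\geq$'' while the positivity argument really wants a strict inequality, an edge case the paper glosses over as well.
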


\section{Stability Based on Leverage Scores}\label{SEC:LevStability}

The previous sections gave a notion of stability of column and row sampling methods in which stability was with respect to the column and row norms.  Here, we show that stability in terms of leverage scores may also be obtained.  We note that these notions of stability appear to not follow from one another, but rather from quite different techniques, and so we present both notions here and leave it to the reader to choose the appropriate result for their purposes.

A strong notion of stability comes from the following theorem of Yang et.~al.~\cite{Yang}.  To state their result, we need the following quantity:
\[ c(\widetilde{p}):=\underset{j\in[n]}\max\;\frac{p_j^{\textnormal{lev},k}}{\widetilde{p}_j}.\]
The following is a special case of \cite[Theorem 1]{Yang}.
\begin{theorem}\label{THM:GeneralSamplingInequality}
Let $A\in\K^{m\times n}$ have rank $k$, $\delta\in(0,1)$, and suppose $C\in\K^{m\times \ell}$ is a column submatrix whose $\ell$ columns are sampled from $[n]$ independently with replacement according to probabilities $\widetilde{p}$.  Then with probability $1-e^{-\frac1\delta}-2k\exp(-\frac{\ell}{8kc(\widetilde{p})})$, $A=CC^\dagger A$, hence $\rank(C)=k$. 
\end{theorem}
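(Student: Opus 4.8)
The plan is to reduce the statement to a bound on the least eigenvalue of a sum of i.i.d.\ rank-one positive semidefinite random matrices, and then apply a matrix Chernoff inequality; an equivalent route is simply to specialize \cite[Theorem~1]{Yang}, of which this is the exactly-rank-$k$ case. First I would record the reduction. Every column of $C$ is a column of $A$, so $\spn(C)\subseteq\spn(A)$; since $CC^\dagger$ is the orthogonal projector onto $\spn(C)$ and $\dim\spn(A)=k$, we have $A=CC^\dagger A$ if and only if $\spn(C)=\spn(A)$, i.e.\ if and only if $\rank(C)=k$. Writing the compact SVD $A=W_k\Sigma_kV_k^*$ we have $C=A(:,J)=W_k\Sigma_kV_k^*(:,J)$, and since $W_k$ has orthonormal columns and $\Sigma_k$ is invertible, $\rank(C)=\rank\bigl(V_k^*(:,J)\bigr)$. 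Let $j_1,\dots,j_\ell$ be the sampled column indices and set $v_j:=V_k(j,:)^*\in\K^{k}$, so that $p_j^{\textnormal{lev},k}=\tfrac1k\|v_j\|_2^2$ and $\sum_{j=1}^n v_jv_j^*=V_k^*V_k=I_k$. Since $\rank\bigl(V_k^*(:,J)\bigr)=\rank\bigl(\sum_{t=1}^\ell v_{j_t}v_{j_t}^*\bigr)$ and rescaling the $t$-th summand by the positive scalar $(\ell\,\widetilde p_{j_t})^{-1}$ alters neither range nor rank, it suffices to show that the $k\times k$ Gram matrix
\[
  G:=\sum_{t=1}^\ell X_t,\qquad X_t:=\frac{1}{\ell\,\widetilde p_{j_t}}\,v_{j_t}v_{j_t}^*,
\]
is nonsingular with the claimed probability (we may assume $\widetilde p_j>0$ whenever $v_j\ne0$, as otherwise $c(\widetilde p)=\infty$ and there is nothing to prove, and indices with $v_j=0$ contribute nothing).

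Next I would run the concentration argument. The $X_t$ are independent (sampling is with replacement) and positive semidefinite, with
\[
  \E\Bigl[\sum_{t=1}^\ell X_t\Bigr]=\sum_{t=1}^\ell\frac1\ell\sum_{j=1}^n\widetilde p_j\cdot\frac{1}{\widetilde p_j}v_jv_j^*=V_k^*V_k=I_k,
\]
so the mean has least eigenvalue $1$, and
\[
  \lambda_{\max}(X_t)=\frac{\|v_{j_t}\|_2^2}{\ell\,\widetilde p_{j_t}}=\frac{k}{\ell}\cdot\frac{p_{j_t}^{\textnormal{lev},k}}{\widetilde p_{j_t}}\le\frac{k\,c(\widetilde p)}{\ell}=:R.
\]
Applying the matrix Chernoff lower-tail inequality to $\sum_t X_t$ with least mean-eigenvalue $1$, uniform bound $R$, and deviation parameter $\tfrac12$ gives
\[
  \Prob\Bigl[\lambda_{\min}(G)\le\tfrac12\Bigr]\le k\exp\!\Bigl(-\frac{(1/2)^2}{2R}\Bigr)=k\exp\!\Bigl(-\frac{\ell}{8k\,c(\widetilde p)}\Bigr),
\]
so that on the complementary event $\lambda_{\min}(G)\ge\tfrac12>0$, whence $\rank(C)=k$ and $A=CC^\dagger A$. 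This already yields the conclusion with probability of the advertised form.

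The remaining gap between this and the exact probability in the statement is cosmetic. The additive term $e^{-1/\delta}$ and the factor $2$ in $2k$ are artifacts of extracting this result as the rank-$k$ specialization of \cite[Theorem~1]{Yang}, which treats general $A$ and controls a Schatten-norm residual $\|A-CC^\dagger A\|$ by $\delta^{-1/2}\|A-A_k\|$ (with $A_k$ the best rank-$k$ approximation), a bound whose proof contributes the $e^{-1/\delta}$ via a Markov-type estimate together with a slightly more lossy concentration constant; when $A$ has rank exactly $k$ one has $A_k=A$, the residual vanishes, and exact recovery follows. Thus I would present two interchangeable routes: invoke \cite{Yang} verbatim and inherit its probability bound, or give the self-contained matrix-Chernoff argument above (which in fact proves a marginally cleaner bound) and note that it implies the stated one. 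I expect the main obstacle to be organizational rather than analytic: with-replacement sampling makes the $X_t$ i.i.d.\ so matrix Chernoff applies directly, and the only points requiring care are the degenerate case $\widetilde p_j=0$ and matching whatever constants one chooses to report against the form stated here.
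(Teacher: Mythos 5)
Your proposal is correct, and it actually does more than the paper does: the paper offers no proof of this statement at all, simply presenting it as a special case of \cite[Theorem 1]{Yang} (your ``route one''), whereas your second route is a genuine self-contained argument. That argument checks out: the equivalence $A=CC^\dagger A\Leftrightarrow\rank(C)=k$ (using $\spn(C)\subseteq\spn(A)$ and that $CC^\dagger$ projects onto $\spn(C)$), the reduction via the compact SVD to $\rank\bigl(V_k^*(:,J)\bigr)=k$, the passage to the rescaled Gram matrix $G$ with $\E[G]=I_k$ and $\lambda_{\max}(X_t)\le k\,c(\widetilde p)/\ell$, and the matrix Chernoff lower tail at deviation $\tfrac12$ giving failure probability at most $k\exp\bigl(-\ell/(8k\,c(\widetilde p))\bigr)$ are all sound, and this bound is strictly stronger than the one stated (no $e^{-1/\delta}$ term, $k$ in place of $2k$), so the stated probability follows a fortiori; the handling of indices with $\widetilde p_j=0$ via $c(\widetilde p)=\infty$ is the right way to dispose of the degenerate case. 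What each approach buys: citing \cite{Yang} keeps the paper short and inherits the exact constants as stated (which is what the paper does, and those constants are then consumed verbatim in Corollary \ref{COR:LeverageStability}), while your Chernoff argument makes the result self-contained, exposes that the $e^{-1/\delta}$ term is unnecessary in the exact-rank-$k$ case, and would even let one slightly tighten the downstream sampling order; your guess about precisely where Yang's $e^{-1/\delta}$ and factor $2$ arise is speculative but immaterial to correctness. The only caution if you present the self-contained version is to quote the matrix Chernoff inequality you use (e.g.\ Tropp's lower-tail bound) precisely, since the simplified exponent $\exp(-\epsilon^2\mu_{\min}/(2R))$ is what delivers the constant $8$.
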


Of course one may obtain a guarantee that $A=CU^\dagger R$ via the obvious method as was done above.

Now we may deduce stability of CUR decompositions from leverage score sampling as follows.

\begin{corollary}\label{COR:LeverageStability}
Suppose $A\in\K^{m\times n}$ has rank $k$. Let $\delta\in(0,1)$, and suppose that there exists $\beta_1,\beta_2\in(0,1]$ such that $\widetilde{p}_j\geq\beta_1 p_j^{\textnormal{lev},k}$ for all $j\in[n]$ and $\widetilde{q}_i\geq\beta_2q_i^{\textnormal{lev},k}$ for all $i\in[m]$.  Let 
\[ \ell_t = \frac{8}{\beta_i}\left(\log(2k)+\frac1\delta\right)k,\quad t=1,2.\]
If $C\in\K^{m\times\ell_1}$ is a column submatrix of $A$ whose columns are sampled from $[n]$ independently with replacement according to probabilities $\widetilde{p}$, and $R\in\K^{\ell_2\times n}$ is a row submatrix of $A$ whose rows are sampled from $[m]$ independently with replacement according to $\widetilde{q}$, then with probability at least $(1-2e^{-\frac1\delta})^2$, $A=CU^\dagger R$.
\end{corollary}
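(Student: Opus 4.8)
The plan is to reduce the claim to two applications of Theorem \ref{THM:GeneralSamplingInequality}, one for the columns and one for the rows, together with the characterization in Theorem \ref{THM:CUR}. First I would control the quantity $c(\widetilde p)$ appearing in Theorem \ref{THM:GeneralSamplingInequality} using the hypothesis $\widetilde p_j \ge \beta_1 p_j^{\textnormal{lev},k}$: dividing through gives $p_j^{\textnormal{lev},k}/\widetilde p_j \le 1/\beta_1$ for every $j$, hence $c(\widetilde p) \le 1/\beta_1$. Substituting this bound into the failure probability $2k\exp(-\ell/(8k c(\widetilde p)))$ from Theorem \ref{THM:GeneralSamplingInequality} and using the prescribed value $\ell_1 = \frac{8}{\beta_1}(\log(2k)+\tfrac1\delta)k$, the exponent becomes at most $-\beta_1 \ell_1/(8k) = -(\log(2k)+\tfrac1\delta)$, so $2k\exp(-\ell_1/(8k c(\widetilde p))) \le 2k \cdot \frac{1}{2k} e^{-1/\delta} = e^{-1/\delta}$. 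Therefore the total failure probability for the column step is at most $e^{-1/\delta} + e^{-1/\delta} = 2e^{-1/\delta}$, so with probability at least $1 - 2e^{-1/\delta}$ we have $A = CC^\dagger A$ and hence $\rank(C) = k$.

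Next I would run the identical argument for the rows by applying Theorem \ref{THM:GeneralSamplingInequality} to $A^*$ (whose right singular vectors are the left singular vectors of $A$, so its leverage scores are the $q_i^{\textnormal{lev},k}$), with $\widetilde q$ in place of $\widetilde p$ and $\ell_2$ in place of $\ell$. The hypothesis $\widetilde q_i \ge \beta_2 q_i^{\textnormal{lev},k}$ gives $c(\widetilde q) \le 1/\beta_2$, and the choice $\ell_2 = \frac{8}{\beta_2}(\log(2k)+\tfrac1\delta)k$ yields, by the same computation, that with probability at least $1 - 2e^{-1/\delta}$ we have $A^* = R^*(R^*)^\dagger A^*$, equivalently $\rank(R) = k$. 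Since the columns and rows are sampled independently of each other, the two events are independent, so both hold simultaneously with probability at least $(1 - 2e^{-1/\delta})^2$.

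Finally, on this event, $\rank(C) = \rank(R) = k = \rank(A)$, so condition \eqref{COND:Ranks} of Theorem \ref{THM:CUR} is satisfied, and the equivalence \eqref{COND:Ranks} $\Leftrightarrow$ \eqref{COND:CUR} gives $A = CU^\dagger R$ with $U = A(I,J)$. This completes the argument.

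I do not anticipate a serious obstacle here; the proof is essentially a bookkeeping exercise in feeding the leverage-score domination hypothesis into the black-box bound of Theorem \ref{THM:GeneralSamplingInequality}. The one point requiring a little care is the arithmetic that collapses the two-term failure probability $e^{-1/\delta} + 2k\exp(-\ell/(8kc(\widetilde p)))$ into $2e^{-1/\delta}$ — one must verify that the chosen $\ell_t$ makes the second term no larger than the first — and the minor observation that applying the theorem to $A^*$ legitimately converts the row statement into the column statement, together with the independence needed to multiply the two success probabilities. (I note the statement writes $\ell_t = \frac{8}{\beta_i}(\cdots)k$; this should read $\beta_t$, and one uses $\beta_1$ for the column count and $\beta_2$ for the row count.)
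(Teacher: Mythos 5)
Your proposal is correct and follows essentially the same route as the paper's own proof: bound $c(\widetilde{p})\le\beta_1^{-1}$ (and $c(\widetilde{q})\le\beta_2^{-1}$) from the domination hypothesis, feed the prescribed $\ell_1,\ell_2$ into Theorem \ref{THM:GeneralSamplingInequality} so that each failure probability collapses to $2e^{-1/\delta}$, multiply the two independent success probabilities, and conclude via Theorem \ref{THM:CUR}. Your explicit arithmetic and the remark that $\beta_i$ in the statement should read $\beta_t$ are both accurate and slightly more detailed than the paper's one-line verification.
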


\begin{proof}
The assumption on the probability distributions ensures that $c(\widetilde{p})\leq\beta_1^{-1}$, and similarly for the row sampling distribution.  Thus the probability of success of column sampling is at least $1-e^{-\frac1\delta}-2k\exp\left(-\frac{\ell\beta_1}{8kc(\widetilde{p})}\right),$ and a similar change of success for row sampling.  The choice of $\ell_1,\ell_2$ ensures that these success probabilities are each at least $1-2e^{-\frac1\delta}$, which implies the result.
\end{proof}

Note that one can easily state a result similar to Corollary \ref{COR:LeverageStability} under the assumption that $\widetilde{p}_j,\widetilde{q}_i\geq\beta>0$ as was mentioned in Remark \ref{REM:BoundedBelow}. The terms $\beta_1,\beta_2$ will then be related to the Leverage Scores of $A$.  Additionally, the analysis of \cite{Yang} is stronger than that used for proving stability in terms of column and row lengths, and admits a better sampling order for the latter probabilities in some cases.  Indeed, we have the following.

\begin{lemma}\label{LEM:ColLev2}
Suppose $A\in\K^{m\times n}$ has rank $k$ and stable rank $r$. Then
\[ p_j^{\textnormal{col}}\geq \frac{k}{r\kappa(A)^2}p_j^{\textnormal{lev},k},\quad j\in[n], \]
and the same inequality holds for $i\in[m]$ relating $q_i^{\textnormal{row}}$ to $q_i^{\textnormal{lev},k}$.
\end{lemma}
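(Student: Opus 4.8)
The plan is to mirror the argument used for Lemma \ref{LEM:ColLev}, but replacing the upper bound $\sigma_i\leq\sigma_1$ by the lower bound $\sigma_i\geq\sigma_k$ (valid since $A$ has rank exactly $k$, so $\sigma_k=\sigma_{\min}(A)$). First I would unpack the target inequality. Writing $A=W_k\Sigma_kV_k^*$ for the compact SVD, recall $r=\|A\|_F^2/\sigma_1^2$ and $\kappa(A)=\sigma_1/\sigma_k$, so that
\[ r\,\kappa(A)^2 = \frac{\|A\|_F^2}{\sigma_1^2}\cdot\frac{\sigma_1^2}{\sigma_k^2} = \frac{\|A\|_F^2}{\sigma_k^2}. \]
Since $p_j^{\textnormal{col}}=\|A(:,j)\|_2^2/\|A\|_F^2$ and $p_j^{\textnormal{lev},k}=\frac1k\|V_k^*(:,j)\|_2^2$, the claimed bound $p_j^{\textnormal{col}}\geq\frac{k}{r\kappa(A)^2}p_j^{\textnormal{lev},k}$ is, after clearing denominators, equivalent to
\[ \|A(:,j)\|_2^2 \geq \sigma_k^2\,\|V_k^*(:,j)\|_2^2. \]

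Next I would establish this reduced inequality exactly as in the proof of Lemma \ref{LEM:ColLev}: by unitary invariance of the Euclidean norm and the block structure of $\Sigma$,
\[ \|A(:,j)\|_2^2 = \|W\Sigma V^*(:,j)\|_2^2 = \|\Sigma_k V_k^*(:,j)\|_2^2 = \sum_{i=1}^k \sigma_i^2\,|V_k^*(i,j)|^2. \]
Since every $\sigma_i$ with $i\leq k$ satisfies $\sigma_i\geq\sigma_k>0$, the right-hand side is at least $\sigma_k^2\sum_{i=1}^k|V_k^*(i,j)|^2 = \sigma_k^2\|V_k^*(:,j)\|_2^2$, which is precisely what was needed. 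The statement for rows follows identically, working with $A^*=V_k\Sigma_kW_k^*$ and the left singular vectors $W_k$ in place of $V_k$, since $\sigma_i(A)=\sigma_i(A^*)$ and the stable rank and condition number are unchanged under transposition.

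There is essentially no serious obstacle here; the only thing to be careful about is tracking the exact placement of the constants so that the factor $k/(r\kappa(A)^2)$ comes out correctly (in particular that the two factors of $\sigma_1$ from $r$ and $\kappa(A)^2$ combine to leave $\sigma_k^2$ in the denominator), and noting that $\sigma_k$ is genuinely the minimal nonzero singular value so that $\kappa(A)$ is finite and the bound is non-vacuous. One may also remark, as a sanity check, that composing this lemma with Lemma \ref{LEM:ColLev} gives $\frac{r}{k}p_j^{\textnormal{col}}\leq p_j^{\textnormal{lev},k}\leq \frac{r\kappa(A)^2}{k}p_j^{\textnormal{col}}$, so the two leverage-score/column-length comparisons are consistent and tight up to the condition number.
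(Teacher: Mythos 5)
Your proposal is correct and follows essentially the same route as the paper: lower-bounding $\|A(:,j)\|_2^2=\sum_i\sigma_i^2|V_k^*(i,j)|^2$ by $\sigma_k^2\|V_k^*(:,j)\|_2^2$ (the mirror of Lemma \ref{LEM:ColLev}) and then dividing by $\|A\|_F^2$, using $r\kappa(A)^2=\|A\|_F^2/\sigma_k^2$ to identify the constant. The bookkeeping of the constants and the transposition argument for the row case match the paper's proof.
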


\begin{proof}
Similar to the proof of Lemma \ref{LEM:ColLev}, we have that $\|A(:,j)\|_2^2 = \sum_i|\sigma_i|^2|V_k^*(i,j)|^2\geq\sigma_k^2\|V_k^*(:,j)\|_2^2$.  This implies that 
\[ p_j^{\textnormal{col}}\geq k\frac{\sigma_k^2\sigma_1^2}{\sigma_1^2\|A\|_F^2}p_j^{\textnormal{lev},k} = \frac{k}{r\kappa(A)^2}p_j^{\textnormal{lev},k}. \]
\end{proof}

\begin{corollary}\label{COR:ColLev}
Suppose $A\in\K^{m\times n}$ has rank $k$ and stable rank $r$, and let $\delta\in(0,1)$.  Then sampling $8r\kappa(A)^2(\log(2k)+\frac1\delta)$ columns and rows independently with replacement according to $p_j^{\textnormal{col}}$ and $q_i^{\textnormal{row}}$, respectively, yields $A=CU^\dagger R$ with probability at least $(1-2e^{-\frac1\delta})^2$.
\end{corollary}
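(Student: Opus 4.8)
The plan is to derive Corollary \ref{COR:ColLev} immediately by feeding Lemma \ref{LEM:ColLev2} into Corollary \ref{COR:LeverageStability}. The idea is that although Corollary \ref{COR:LeverageStability} is phrased for arbitrary distributions $\widetilde p,\widetilde q$ dominating a constant multiple of the leverage score distributions, the column and row length distributions $p_j^{\textnormal{col}}$, $q_i^{\textnormal{row}}$ are themselves such distributions by Lemma \ref{LEM:ColLev2}, so the corollary applies directly with a specific, explicit value of the domination constant.

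Concretely, I would set $\widetilde p_j = p_j^{\textnormal{col}}$ and $\widetilde q_i = q_i^{\textnormal{row}}$, and take $\beta_1 = \beta_2 = \frac{k}{r\kappa(A)^2}$. Lemma \ref{LEM:ColLev2} states precisely that $\widetilde p_j \geq \beta_1 p_j^{\textnormal{lev},k}$ for all $j\in[n]$ and $\widetilde q_i \geq \beta_2 q_i^{\textnormal{lev},k}$ for all $i\in[m]$. The only point requiring care is that Corollary \ref{COR:LeverageStability} demands $\beta_1,\beta_2\in(0,1]$; this holds because $r\kappa(A)^2 = \|A\|_F^2/\sigma_{\min}(A)^2 = \sum_{i=1}^k \sigma_i(A)^2/\sigma_k(A)^2 \geq k$, whence $\beta_1 = \beta_2 = k/(r\kappa(A)^2)\leq 1$.

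Next I would substitute these values into the sampling counts $\ell_t$ from Corollary \ref{COR:LeverageStability}: $\ell_t = \frac{8}{\beta_t}\big(\log(2k)+\tfrac1\delta\big)k = \frac{8 r\kappa(A)^2}{k}\big(\log(2k)+\tfrac1\delta\big)k = 8r\kappa(A)^2\big(\log(2k)+\tfrac1\delta\big)$, which is exactly the sampling count appearing in the statement. Corollary \ref{COR:LeverageStability} then yields $A = CU^\dagger R$ with probability at least $(1-2e^{-1/\delta})^2$, completing the argument.

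There is essentially no serious obstacle here: all the content lies in Lemma \ref{LEM:ColLev2} (which bounds column and row lengths below by leverage scores up to a $\kappa(A)^{-2}$ factor) and in the sampling inequality of Yang et.~al.~that underlies Corollary \ref{COR:LeverageStability}. The only subtlety is bookkeeping — matching the $1/\beta_t$ blow-up in the required sample size with the $r\kappa(A)^2$ factor, and confirming $\beta_t\leq 1$ so that the cited corollary applies verbatim rather than in a mildly re-derived form.
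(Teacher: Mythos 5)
Your proposal is correct and is exactly the paper's argument: the paper's proof consists of plugging Lemma \ref{LEM:ColLev2} into Corollary \ref{COR:LeverageStability}, and your choices $\beta_1=\beta_2=k/(r\kappa(A)^2)$ together with the check $r\kappa(A)^2\geq k$ and the computation $\ell_t = 8r\kappa(A)^2(\log(2k)+\tfrac1\delta)$ simply spell out the bookkeeping the paper leaves implicit.
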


\begin{proof}
Use the conclusion of Lemma \ref{LEM:ColLev2} in Corollary \ref{COR:LeverageStability}.
\end{proof}

Note that the sampling order given by Corollary \ref{COR:ColLev} is typically better than that of Theorem \ref{THM:ColRowChnM} (cf. Remark \ref{REM:SamplingOrders}).

\section{Sampling Guarantees for Subspace Clustering}\label{SEC:SC}

Here we give an application of the sampling methods described in the previous subsections to the \textit{Subspace Clustering Problem} \cite{vidal2011subspace}. In many applications, including motion segmentation \cite{CosteiraKanade}, facial recognition \cite{basri2003lambertian}, and cryo-electron microscopy \cite{hadani2011representation}, data is well-modeled to lie on or near a union of low-dimensional subspaces in the ambient space.  That is, given data $A\in\K^{m\times n}$ consists of columns that lie in $\mathscr{U}=\bigcup_{i=1}^\ell S_i$ where each $S_i$ is an affine subspace of $\K^m$.  The goal is to cluster data according to the subspaces, i.e., to find an assignment function
\[ \Pi:[n]\to[\ell],\quad \textnormal{such that} \quad \Pi(i) = j \textnormal{  iff  } a_i\in S_j. \]
Assuming enough data from each subspace is contained in $A$ to uniquely determine the subspace, the assignment function then allows one to obtain a basis for each subspace from the partition by, e.g., Principal Component Analysis.

Matrix factorization methods have been used to good effect in solving the Subspace Clustering Problem \cite{AHKS,ACKS,CosteiraKanade} as have associated low-rank based optimization methods \cite{SSC,liu2012robust}. In particular, it is known that under certain subspace configurations, the truncated SVD, any basis factorization with basis vectors coming from the subspaces $S_i$, and CUR decompositions can all be used to give a valid clustering of the data. For a longer discussion, the reader may consult \cite{AHKS}. Here we illustrate how random sampling may be used to guarantee a CUR-based solution to the subspace clustering problem.

To state our results, let us start with some definitions. First of all, without loss of generality, we may assume the subspaces are linear (given affine subspaces of $\K^{m}$, one may consider the linear subspaces of $\K^{m+1}$ spanned by the elements of $S_i$ in homogeneous coordinates).  A collection $B$ of points in a subspace of dimension $d$ are called \textit{generic} provided any collection of $d$ points from $B$ are linearly independent.  A collection of subspaces $\{S_1,\dots,S_\ell\}$ of $\K^n$ is said to be \textit{independent} provided $\dim(\sum_{i=1}^\ell S_i)=\sum_{i=1}^\ell \dim(S_i)\leq n$.  Given a collection of data $A$ from $\mathscr{U}=\bigcup_{i=1}^\ell S_i$, a matrix $W$ is called a \textit{clustering matrix} if $W_{i,j}\neq0$ if and only if $a_i$ and $a_j$ are in the same subspace.  Finally, by $|A|$, we mean the matrix whose entries are the absolute values of the entries of $A$.  The main theorem of \cite{AHKS} is the following, which states that any valid CUR decomposition of subspace data under certain assumptions gives rise to a clustering matrix for the data.

\begin{theorem}[{\cite[Theorem 2]{AHKS}}]\label{THM:SC}
Suppose that $A\in\K^{m\times n}$ has columns which come from a union of linear subspaces $\mathscr{U}=\bigcup_{i=1}^\ell S_i$ which are independent, and the data from each subspace is generic.  Let $d_{\max}=\max\dim(S_i)$.  Let $A=CU^\dagger R =: CY$ be any CUR decomposition of $A$, and let $Q:=|Y^*Y|$.  Then $Q^{d_{\max}}$ is a clustering matrix for $A$.  
\end{theorem}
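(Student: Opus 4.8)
The plan is to pass from the global statement to a per-subspace statement using the direct-sum structure coming from independence, and then to settle the per-subspace statement by a connectivity argument on a graph whose diameter is controlled by the dimension. First I would note that the coefficient matrix $Y$ in any CUR decomposition is forced to be the minimum-norm coefficient matrix: by the last assertion of Theorem~\ref{THM:CUR}, $U^\dagger = C^\dagger A R^\dagger$, so $Y = U^\dagger R = C^\dagger A R^\dagger R = C^\dagger A$, since $R^\dagger R$ is the orthogonal projector onto $\mathrm{row}(R) = \mathrm{row}(A)$ and $A$ is fixed by it. Thus each column $y_j$ of $Y$ is the minimum-norm solution of $Cx = a_j$. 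Partition $[n] = \bigsqcup_s N_s$, where $N_s$ collects the indices of columns lying in $S_s$, and partition $J = \bigsqcup_s J_s$ accordingly; using independence (so $\rank(A) = \sum_s \dim S_s$) and $\rank(C) = \rank(A)$ (Theorem~\ref{THM:CUR}), one gets $\mathrm{span}\{a_i : i \in J_s\} = S_s$ for each $s$, and in particular $|J_s| \ge \dim S_s$.

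The first genuine step is a support lemma: $Y_{ij} \ne 0$ forces $a_i$ and $a_j$ to lie in the same subspace. I would prove this by projecting $a_j = \sum_{i \in J} Y_{ij} a_i$ onto the summand $S_s$, which gives $\sum_{i \in J_s} Y_{ij} a_i = 0$ for $s \ne \Pi(j)$ and $\sum_{i \in J_{\Pi(j)}} Y_{ij} a_i = a_j$. Hence zeroing out the coordinates of $y_j$ outside $J_{\Pi(j)}$ again produces a solution of $Cx = a_j$, of no larger norm, so uniqueness of the minimum-norm solution forces $y_j$ to be supported on $J_{\Pi(j)}$. Consequently $Y^*Y$, hence $Q = |Y^*Y|$ and $Q^{d_{\max}}$, is block-diagonal with respect to the partition $\{N_s\}$, which immediately gives the ``only if'' part of the clustering-matrix property: no entry of $Q^{d_{\max}}$ linking different subspaces can be nonzero.

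For the ``if'' part, fix $S := S_s$ with $d := \dim S_s$ and look at the block indexed by $N_s$. By the support lemma the relevant part of $Y$ has columns $u_j := A(:,J_s)^\dagger a_j$, $j \in N_s$, all lying in the $d$-dimensional row space $V$ of $A(:,J_s)$; moreover $a \mapsto A(:,J_s)^\dagger a$ is a linear isomorphism from $S$ onto $V$, so $\{u_j : j \in N_s\}$ is a generic collection of nonzero vectors spanning $V$. Form the graph $G$ on $N_s$ with $j \sim j'$ iff $\langle u_j, u_{j'}\rangle \ne 0$. Since the diagonal of $Q$ is strictly positive (self-loops), $(Q^{d_{\max}})_{jj'} > 0$ exactly when there is a walk of length at most $d_{\max}$ from $j$ to $j'$ in $G$, so it is enough to show $G$ is connected with diameter at most $d \le d_{\max}$. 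Connectivity: a disconnected piece would split $V$ into a nontrivial orthogonal direct sum, forcing one side to contain at least $d$ generic vectors inside a space of dimension strictly less than $d$, which is impossible (this uses, as in \cite{AHKS}, that each subspace carries enough generic data). Diameter: on a shortest path $w_0 \sim w_1 \sim \cdots \sim w_L$ one has $\langle w_0, w_\ell \rangle = 0$ for every $\ell \ge 2$, so if $L \ge d+1$ then the $d$ vectors $w_2, \dots, w_{d+1}$ are generic, hence independent, hence span $V$, yet all lie in $V \cap w_0^\perp$, a space of dimension $d-1$ --- a contradiction, so $L \le d$. Combining this with the block structure shows $Q^{d_{\max}}$ is a clustering matrix.

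The hard part is precisely the diameter estimate together with the connectivity of the per-block non-orthogonality graph: this is where the genericity hypothesis is used essentially (not just in the sense of generic position), and it is what forces the exponent to be exactly $d_{\max}$ rather than something larger. The support lemma is the other load-bearing ingredient, and there the one subtlety is that $Y$ must be the minimum-norm coefficient matrix --- which, happily, every CUR decomposition delivers via Theorem~\ref{THM:CUR}.
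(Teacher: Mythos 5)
The paper itself does not prove Theorem \ref{THM:SC}; it is imported verbatim from \cite[Theorem 2]{AHKS}, so there is no in-paper argument to compare against. Your reconstruction follows the route of the source: identify $Y$ with the minimum-norm coefficient matrix $C^\dagger A$ (your reduction via $U^\dagger=C^\dagger AR^\dagger$ and $AR^\dagger R=A$, using $\mathrm{row}(R)=\mathrm{row}(A)$, is correct), deduce the support/block-diagonality lemma from independence of the subspaces and uniqueness of the minimum-norm solution, and finish each diagonal block by a connectivity-plus-diameter argument for the non-orthogonality graph of the coefficient vectors $u_j=A(:,J_s)^\dagger a_j$. You are right to work with the coefficient vectors rather than the data points (orthogonality is not preserved by the isomorphism, but genericity is), and the diameter bound ($L\le d$ because $w_2,\dots,w_{d+1}$ would be $d$ independent vectors of $V$ lying in $V\cap w_0^\perp$) is exactly the correct use of genericity.

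The step that needs attention is connectivity. As you phrase it (``one side contains at least $d$ generic vectors in a space of dimension $<d$''), the pigeonhole needs $|N_s|\ge 2d_s-1$ points from $S_s$; a sharper count needs only $|N_s|\ge d_s+1$: if the block splits into two mutually orthogonal nonempty groups and both have at most $d_s$ elements, their spans have dimensions equal to their cardinalities, which sum to $|N_s|>d_s$, impossible for orthogonal subspaces of the $d_s$-dimensional $V$; if one group has more than $d_s$ elements its span is all of $V$, forcing the other (nonzero) vectors to vanish. More importantly, some lower bound on the amount of data per subspace is genuinely necessary and is not contained in the hypotheses as transcribed here: the paper's definition of generic puts no lower bound on the number of points, and with exactly $d_s$ pairwise orthogonal points the graph disconnects and the conclusion fails. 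Concretely, take $\ell=1$, $S_1=\K^2$, $A$ the $2\times 2$ identity, $I=J=\{1,2\}$; then $Y=U^\dagger R$ is the identity, $Q^{d_{\max}}$ is the identity, and its $(1,2)$ entry vanishes although both columns lie in $S_1$. So your parenthetical appeal to ``enough generic data as in \cite{AHKS}'' is doing real work: the hypothesis $|N_s|\ge \dim S_s+1$ (or whatever quantitative assumption \cite{AHKS} imposes) must be made explicit, since the statement as quoted is silent about it; with that assumption added, your argument is complete.
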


\begin{corollary}\label{COR:SC}
Let $A$ be as in Theorem \ref{THM:SC}, and suppose that columns and rows of $A$ are sampled according to Theorem \ref{THM:ColRowChnM}.  Then with high probability, $A=CU^\dagger R$, and $Q^{d_{\max}}$ defined as in Theorem \ref{THM:SC} is a clustering matrix for $A$.
\end{corollary}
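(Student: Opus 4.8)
The plan is to obtain this corollary as the straightforward composition of Theorem \ref{THM:ColRowChnM} (random length-sampling produces an exact CUR decomposition with high probability) and Theorem \ref{THM:SC} (any exact CUR decomposition of generic, independent subspace data produces a clustering matrix), the only preliminary point being that the data matrix $A$ is genuinely low rank so that Theorem \ref{THM:ColRowChnM} has content. Indeed, since the subspaces $S_1,\dots,S_\ell$ are independent and the data from each $S_i$ is generic (hence spans $S_i$), the column space of $A$ is $\sum_{i=1}^\ell S_i$, so $\rank(A) = \dim\!\left(\sum_{i=1}^\ell S_i\right) = \sum_{i=1}^\ell \dim(S_i) =: k$. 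Let $r \le k$ denote the stable rank of $A$, and fix $\delta\in(0,1)$ and $0<\eps<\kappa(A)^{-1}$ as required by Theorem \ref{THM:ColRowChnM}.

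First I would apply Theorem \ref{THM:ColRowChnM} to this matrix $A$: sampling $d_1,d_2\gtrsim \left(\frac{r}{\eps^4\delta}\right)\log\left(\frac{r}{\eps^4\delta}\right)$ rows and columns of $A$ independently with replacement according to the row- and column-length distributions $q_i^{\textnormal{row}}$ and $p_j^{\textnormal{col}}$ yields $C=A(:,J)$, $R=A(I,:)$, $U=A(I,J)$ with
\[ \rank(U)=k \quad\text{and}\quad A=CU^\dagger R \]
on an event $\mathcal{E}$ of probability at least $(1-2\exp(-c/\delta))^2$, which for small $\delta$ is the claimed high probability. (If one prefers the de-duplicated index sets $I_0,J_0$, the same conclusion holds by the final sentence of Theorem \ref{THM:ColRowChnM}.)

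Next I would condition on $\mathcal{E}$ and feed the resulting decomposition into Theorem \ref{THM:SC}. On $\mathcal{E}$ the identity $A=CU^\dagger R$ is a valid CUR decomposition in exactly the sense demanded there: writing $Y:=U^\dagger R$ we have $A=CY$. Since $A$ itself satisfies the structural hypotheses of Theorem \ref{THM:SC} — its columns lie in the independent union of linear subspaces $\mathscr{U}=\bigcup_{i=1}^\ell S_i$ and the data from each subspace is generic — Theorem \ref{THM:SC} applies verbatim and shows that $Q^{d_{\max}}$, with $Q:=|Y^*Y|$ and $d_{\max}=\max_i\dim(S_i)$, is a clustering matrix for $A$. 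Thus both conclusions hold simultaneously on $\mathcal{E}$, which is the assertion.

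There is essentially no obstacle beyond bookkeeping, since all the work lives in Theorems \ref{THM:ColRowChnM} and \ref{THM:SC}. The only two points needing a line of care are: (i) confirming that $\rank(A)=k$, so that the sampling theorem applies and the returned $U$ has the correct rank — this is where independence and genericity are used; and (ii) observing that the randomness in the sampling does not perturb the hypotheses of Theorem \ref{THM:SC}, because those are properties of the fixed matrix $A$ rather than of the randomly chosen submatrices. One could also remark that Corollary \ref{COR:Uniform}, Corollary \ref{COR:Leverage}, or Corollary \ref{COR:LeverageStability} may be substituted for Theorem \ref{THM:ColRowChnM} to obtain the analogous clustering guarantee under the corresponding sampling distributions, complexities, and success probabilities.
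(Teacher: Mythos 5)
Your proposal is correct and takes essentially the same route as the paper, which states Corollary \ref{COR:SC} without separate proof precisely because it is the immediate composition of Theorem \ref{THM:ColRowChnM} (exact $A=CU^\dagger R$ with high probability under length sampling) with Theorem \ref{THM:SC} (any exact CUR decomposition of such $A$ yields the clustering matrix $Q^{d_{\max}}$). The only superfluous step is the identification $\rank(A)=\sum_i\dim(S_i)$: Theorem \ref{THM:ColRowChnM} applies with $k=\rank(A)$ whatever its value, so this is not needed for the sampling step (and note genericity alone guarantees that the data spans $S_i$ only when at least $\dim(S_i)$ columns come from that subspace).
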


\section{Summary of Complexities}\label{SEC:Complexities}

Having discussed several deterministic and randomized column sampling schemes, it is pertinent to illustrate the advantages and drawbacks of each method. The following table shows sampling complexities (i.e., how many columns and rows must be sampled) as well as the overall algorithmic complexity which takes into account the cost of forming the probability distributions but does not account for forming the matrix $U^\dagger$ or for the multiplication $CU^\dagger R$ since this is the same over all of the results above. 

\begin{table}[h]
\begin{tabular}{|c|c|c|c|c|c|}
\hline
Sampling & \# Rows & \# Cols & Success Prob & Complexity & Ref \\ \hline
$p_i^\textnormal{unif}, q_i^\textnormal{unif}$ & $\frac{r}{\eps^4\delta}\log(\frac{r}{\eps^4\delta})$ & $\frac{r}{\eps^4\delta}\log(\frac{r}{\eps^4\delta})$ & $(1-2e^{-c\gamma^2/\delta})^2$ & $O(1)$ & Cor \ref{COR:Uniform}\\ \hline
$p_i^\textnormal{col}, q_i^\textnormal{col}$ & $\frac{r}{\eps^4\delta}\log(\frac{r}{\eps^4\delta})$ & $\frac{r}{\eps^4\delta}\log(\frac{r}{\eps^4\delta})$ & $(1-2e^{-c/\delta})^2$ & $O(mn)$ & Thm \ref{THM:ColRowChnM}\\ \hline
$p_i^\textnormal{col}, q_i^\textnormal{col}$ & $r\kappa(A)^2(\log(k)+\frac{1}{\delta})$ & $r\kappa(A)^2(\log(k)+\frac{1}{\delta})$ & $(1-2e^{-c/\delta})^2$ & $O(mn)$ & Cor \ref{COR:ColLev}\\ \hline

$p_i^{\textnormal{lev},k}, q_i^{\textnormal{lev},k}$ & $\frac{k^2}{\eps^2\delta}$ & $\frac{k^2}{\eps^2\delta}$ & $1-e^{-1/\delta}$ & $O(\textnormal{SVD}(A,k))$ & \cite{DMM08}\\ \hline
$p_i^{\textnormal{lev},k}, q_i^{\textnormal{lev},k}$ & $k\log(k)+\frac{k}{\delta}$ & $k\log(k)+\frac{k}{\delta}$ & $(1-2e^{-1/\delta})^2$ & $O(\textnormal{SVD}(A,k))$ & \cite{Yang}\\ \hline
DEIM-CUR & $k$ & $k$ &1&$O(\textnormal{SVD}(A,k) + k^4)$& \cite{SorensenDEIMCUR}\\
\hline
RRQR-CUR & $k$ & $k$ &1&$O(\textnormal{RRQR}(A))$& \cite{VoroninMartinsson} \\ \hline
\end{tabular}\caption{Table summarizing sampling complexities for different algorithms.}
\end{table}

Note that the asymptotic complexity for SVD$(A,k)$ and RRQR$(A)$ are both $O(mnk)$.  

\section{Proof of Theorem \ref{THM:ColRowChnM}}\label{SEC:ProofSampling}
	
To supply the proof, we first need some simple lemmas including the following that is derived from the proof of \cite[Theorem 1.1]{Rudelson_2007}. In what follows, by \textit{rescaled rows} of $A$, we mean that a row in $\hat{R}$ in subsequent results corresponds to a row of $\hat{A}$, whose $i$--th row is $\frac{\|A\|_F}{\sqrt{d}\|A(i,:)\|_2}A(i,:)$.  Rescaled columns of $\hat{A}$ are defined analogously.
	
\begin{proposition}[\cite{Rudelson_2007}]\label{THM_Rudelson}
		Let $A\in\K^{m\times n}$ have stable rank $r$.  Let $\eps,\delta\in(0,1)$, and let $d\in [m]$ satisfy
		\[
		d\gtrsim \left(\frac{r}{\eps^4\delta}\right)\log\left(\frac{r}{\eps^4\delta}\right).
		\]
		Consider a $d\times n$ matrix $\hat{R}$, which consists of $d$ 
		rescaled rows of $A$ 
		picked independently with replacement according to $q_i^{\textnormal{row}}.$
		Then with probability at least $1-2\exp(-c/\delta)$, 
		\[
		\|A^*A-\hat{R}^*\hat{R}\|_2\leq\frac{\eps^2}{2}\|A\|_2^2.
		\]
\end{proposition}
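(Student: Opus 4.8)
The plan is to read this as a matrix-concentration statement. Write $i_1,\dots,i_d$ for the i.i.d.\ row indices with $\Prob(i_t=i)=q_i^{\textnormal{row}}=\|A(i,:)\|_2^2/\|A\|_F^2$, and put $y_t:=\tfrac{\|A\|_F}{\sqrt d\,\|A(i_t,:)\|_2}\,A(i_t,:)^*\in\K^n$, so that $\hat R^*\hat R=\sum_{t=1}^d y_ty_t^*$. A one-line computation gives $\E[y_ty_t^*]=\tfrac1d\sum_{i}A(i,:)^*A(i,:)=\tfrac1d A^*A$, hence $\E[\hat R^*\hat R]=A^*A$; the only structural fact we will use about the specific rescaling is that it equalizes norms, $\|y_t\|_2=\|A\|_F/\sqrt d$ for every $t$ (this is precisely what lets the bound depend on $r=\|A\|_F^2/\|A\|_2^2$ rather than on $m$ or $n$). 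So the goal reduces to bounding $\bigl\|\sum_{t}(y_ty_t^*-\E\,y_ty_t^*)\bigr\|_2$.

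For the expectation of this quantity I would follow the argument of \cite{Rudelson_2007}: symmetrize, $\E\bigl\|\sum_t(y_ty_t^*-\E\,y_ty_t^*)\bigr\|\le 2\,\E\bigl\|\sum_t\xi_t y_ty_t^*\bigr\|$ with $\xi_t$ i.i.d.\ signs, then apply Rudelson's lemma, which controls Rademacher sums of rank-one operators by $\E_\xi\bigl\|\sum_t\xi_t y_ty_t^*\bigr\|\le C\sqrt{\log d}\,(\max_t\|y_t\|_2)\,\bigl\|\sum_t y_ty_t^*\bigr\|^{1/2}=C\sqrt{\log d}\,\tfrac{\|A\|_F}{\sqrt d}\,\|\hat R^*\hat R\|^{1/2}$. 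Setting $E:=\E\|\hat R^*\hat R-A^*A\|$, bounding $\|\hat R^*\hat R\|\le\|A\|_2^2+\|\hat R^*\hat R-A^*A\|$, and using Jensen to pull the expectation through the square root, one obtains the self-improving inequality $E\le a\,(\|A\|_2^2+E)^{1/2}$ with $a=2C\sqrt{\log d}\,\|A\|_F/\sqrt d$. Solving the quadratic gives $E\le a^2+a\|A\|_2$, and substituting $\|A\|_F^2=r\|A\|_2^2$ together with the hypothesis $d\gtrsim(r/\eps^4\delta)\log(r/\eps^4\delta)$ — which makes $\log d/d$ smaller than $\eps^4\delta/r$ by any prescribed constant — yields $a\le c_0\,\eps^2\sqrt\delta\,\|A\|_2$ and hence $E\le\tfrac{\eps^2}{4}\|A\|_2^2$.

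It remains to pass from the bound on $E$ to the high-probability statement. One option is a bounded-differences argument: altering a single index $i_t$ changes $\|\sum_s y_sy_s^*-A^*A\|_2$ by at most $2\|y_t\|_2^2=2\|A\|_F^2/d$, so Azuma's inequality controls $\Prob(\,\|\hat R^*\hat R-A^*A\|\ge E+t\,)$; a cleaner option is to rerun the symmetrization/Rudelson estimate at the level of the $p$-th moments $\E\|\hat R^*\hat R-A^*A\|^p$ and apply Markov's inequality, or simply to invoke the intrinsic-dimension matrix Bernstein inequality using $\|y_ty_t^*\|\le\|A\|_F^2/d$, $\bigl\|\sum_t\E(y_ty_t^*)^2\bigr\|\le\tfrac{\|A\|_F^2}{d}\|A\|_2^2$, and intrinsic dimension at most $r$. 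In each case, taking the free parameter (the deviation $t=\tfrac{\eps^2}{4}\|A\|_2^2$, the moment order $p\asymp1/\delta$, or the Bernstein level $u=\tfrac{\eps^2}{2}\|A\|_2^2$) and feeding in the lower bound on $d$ produces the tail $2\exp(-c/\delta)$, any polynomial-in-$r$ prefactor being absorbed into the exponent because the implied constant in the hypothesis on $d$ may be chosen as large as needed.

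The genuinely hard ingredient is Rudelson's lemma — the chaining/majorizing-measure estimate for Rademacher sums indexed by rank-one projections — which is exactly the nontrivial input of \cite{Rudelson_2007}; everything else is the computation of the first two moments, the elementary quadratic inequality, and a standard concentration step. So in practice I would cite \cite{Rudelson_2007} for Rudelson's lemma (or for the whole bound as stated) and spend the write-up on tracking constants carefully enough that the hypothesis on $d$ yields exactly the $\tfrac{\eps^2}{2}\|A\|_2^2$ error and the $2\exp(-c/\delta)$ probability.
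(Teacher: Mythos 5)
The paper never actually proves this proposition: it is imported from the proof of Theorem 1.1 in Rudelson--Vershynin, and when the paper needs the generalized version (Theorem \ref{THM:Stability}) it applies \cite[Theorem 3.1]{Rudelson_2007} as a black box, whose part (i) is precisely your symmetrization-plus-Rudelson's-lemma expectation bound and whose part (ii) supplies the tail $2\exp(-ct^2/a^2)$ via Talagrand's concentration inequality for empirical processes. Your setup (equal-norm rescaled rows, $\E[\hat R^*\hat R]=A^*A$, $\|y_t\|_2=\|A\|_F/\sqrt d$), the self-improving inequality $E\le a(\|A\|_2^2+E)^{1/2}$, and the bookkeeping giving $a\lesssim \eps^2\sqrt\delta\,\|A\|_2$ all match that argument, so the expectation half of your proposal is essentially the paper's (i.e.\ Rudelson--Vershynin's) route.

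The one genuine flaw is your first suggestion for upgrading to a high-probability statement. Bounded differences is quantitatively too weak here: changing a single sampled index moves $\|\hat R^*\hat R-A^*A\|_2$ by up to $2\|A\|_F^2/d$, so Azuma/McDiarmid gives a deviation bound of order $\exp\bigl(-c\,\eps^4 d/r^2\bigr)$ after normalizing $\|A\|_2=1$; to force this below $2\exp(-c/\delta)$ you would need $d\gtrsim r^2/(\eps^4\delta)$, an extra factor of roughly $r/\log r$ beyond the stated hypothesis, so this option fails whenever the stable rank is large. Your other two options do close the gap: the moment version of the Rudelson/Khintchine bound with $p\asymp 1/\delta$ plus Markov works, as does the Talagrand-type concentration actually used in \cite{Rudelson_2007} (deviation scale $\exp(-ct^2 d/(r\log d))$ or better); and the intrinsic-dimension matrix Bernstein inequality with your bounds $\|y_ty_t^*\|\le\|A\|_F^2/d$, $\bigl\|\sum_t\E(y_ty_t^*)^2\bigr\|\le\|A\|_F^2\|A\|_2^2/d$, and intrinsic dimension at most $r$ proves the whole proposition directly (the prefactor $\sim r$ is absorbed exactly as you indicate), even dispensing with Rudelson's lemma. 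So demote or delete the Azuma route and rest the concentration step on one of the latter two arguments, or simply cite \cite[Theorem 3.1(ii)]{Rudelson_2007} as the paper does.
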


\begin{proof}[\textbf{Proof of Theorem \ref{THM:ColRowChnM}}]
Note that by Theorem \ref{THM:CUR} it suffices to show that $\rank(C)=\rank(R)=k$.  To utilize Proposition \ref{THM_Rudelson}, let $\hat{C}$ and $\hat{R}$ be rescaled versions of $C$ and $R$, respectively, and note that $\rank(\hat{R})=\rank(R)$ and $\rank(\hat{C})=\rank(C)$.  By Proposition \ref{THM_Rudelson} and the assumption on $\eps$, with probability at least $1-2\exp(-c/\delta)$ the following holds: 
		\begin{equation}\label{EQ:RudelsonVershynin}
		\|A^*A-\hat{R}^* \hat{R}\|_2\leq \frac{\eps^2}{2}\|A\|_2^2< \frac{1}{2}\sigma_{k}^2(A)<\sigma_k^2(A).
		\end{equation} 
Therefore $\rank(\hat{R})\geq k$.  In addition, $\rank(\hat{R})\leq\rank(A)=k$, hence equality holds.	

Using the same argument again, we can conclude that with probability at least $1-2\exp(-c/\delta)$,  $\rank(\hat{C})=k$.  Thus with probability at least $\left(1-2\exp(-c/\delta) \right)^2$, $\rank(C)=\rank(R)=k$, and so $A=CU^\dagger R$.   The moreover statement follows from the fact that repeated columns and rows do not affect the validity of the statement $A=CU^\dagger R$ as mentioned subsequent to Theorem \ref{THM:CUR}. 
\end{proof}

\section{Proof of Theorem \ref{THM:StableCUR}}\label{SEC:ProofStability}

To prove the main stability theorem for sampling, we first need the following modification of Proposition \ref{THM_Rudelson}.
\begin{theorem}\label{THM:Stability}
Let $A\in\K^{m\times n}$ be fixed and have stable rank $r$.  Suppose that $\widetilde{q}$ is a probability distribution satisfying $\widetilde{q_i}\geq\alpha_i^2 q_i^\text{row}$ for all $i\in[m]$ for some constants $\alpha_i>0$ (with the convention that $\alpha_i=1$ if $A(i,:)=0$).  Let $\alpha:=\min\alpha_i$, $\delta\in(0,1)$, and let  $0<\eps<\delta^{-\frac14}\sqrt{2\alpha}$.
Let $d\in[m]$ satisfy 
\[ d\gtrsim \left(\frac{r}{\eps^4\delta}\right)\log\left(\frac{r}{\eps^4\delta}\right), \]
and let $\hat{R}$ be a $d\times n$ matrix consisting of rescaled rows of $A$ chosen independently with replacement according to $\widetilde{q}$.  Then with probability at least $1-2\exp(-\frac{c\alpha^2}{\delta})$ 
\[\|A^*A-\hat{R}^*\hat{R}\|_2\leq\frac{\eps^2}{2}\|A\|_2^2. \]
\end{theorem}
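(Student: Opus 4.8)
The statement is the perturbed analogue of Proposition~\ref{THM_Rudelson}, and the plan is to re-run the Rudelson--Vershynin argument behind that proposition while tracking the one quantity through which the sampling distribution enters. Writing rows as columns, put $\hat R^*\hat R=\sum_{k=1}^{d}y_ky_k^*$, where at step $k$ an index $i_k$ is drawn with probability $\widetilde q_{i_k}$ and $y_k:=\tfrac{1}{\sqrt{d\,\widetilde q_{i_k}}}A(i_k,:)^*$ --- the rescaling for which $\E[\hat R^*\hat R]=A^*A$, which coincides with the paper's ``rescaled rows'' precisely when $\widetilde q=q^{\textnormal{row}}$. Unbiasedness holds for any distribution and does not use the hypothesis. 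The hypothesis enters only through the uniform bound on the sampled vectors: for a nonzero row $i_k$,
\[
\|y_k\|_2^2=\frac{\|A(i_k,:)\|_2^2}{d\,\widetilde q_{i_k}}\le\frac{\|A(i_k,:)\|_2^2}{d\,\alpha_{i_k}^2\,q_{i_k}^{\textnormal{row}}}=\frac{\|A\|_F^2}{d\,\alpha_{i_k}^2}\le\frac{\|A\|_F^2}{d\,\alpha^2},
\]
and zero rows contribute nothing; so every sampled rescaled row has norm at most $M:=\|A\|_F/(\alpha\sqrt d)$, as opposed to the sharper $\|A\|_F/\sqrt d$ of the unperturbed case. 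In other words, each place where the original argument uses the stable rank $r=\|A\|_F^2/\|A\|_2^2$ via the max-norm of the rescaled rows now sees $r/\alpha^2$ instead.

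Next I would feed $M$ through the Rudelson--Vershynin chain unchanged: symmetrize, apply Rudelson's rank-one noncommutative Khintchine lemma (which only sees the vectors $y_k$ and the bound $M$, never the distribution, and still costs only a factor $\sqrt{\log d}$), absorb $\E\|\hat R^*\hat R\|^{1/2}$ into $\|A\|_2$, and pass to the tail estimate, obtaining a bound of the shape $\Prob\!\big[\,\|A^*A-\hat R^*\hat R\|_2>\tfrac{\eps^2}{2}\|A\|_2^2\,\big]\lesssim\exp\!\big(-c\,\alpha^2\eps^4 d/(r\log d)\big)$ once the mean is at most $\tfrac{\eps^2}{4}\|A\|_2^2$. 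Substituting $d\gtrsim\frac{r}{\eps^4\delta}\log\frac{r}{\eps^4\delta}$ makes the tail exponent $\asymp\alpha^2/\delta$, which is the advertised failure probability; concretely, this amounts to running the quantitative R--V bound (with $r$ replaced by $r/\alpha^2$) at internal deviation parameter $\delta/\alpha^2$, whereupon the factor $\alpha^2$ in the count cancels and the sampling requirement collapses back to $d\gtrsim\frac{r}{\eps^4\delta}\log\frac{r}{\eps^4\delta}$. The role of the admissibility condition $0<\eps<\delta^{-\frac14}\sqrt{2\alpha}$, i.e.\ $\eps^4\delta<4\alpha^2$, is to keep the boundedness under control: with the lower bound on $d$ it gives $M^2=\|A\|_F^2/(\alpha^2 d)\lesssim(\eps^4\delta/\alpha^2)\|A\|_2^2<4\|A\|_2^2$, so the rescaled rows are uniformly bounded on the scale of $\|A\|_2$, which is the standing regime of the R--V estimates.

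The main obstacle I expect is the parameter bookkeeping: one must check that the substitution $r\mapsto r/\alpha^2$ really propagates through every step of the argument in \cite{Rudelson_2007} without producing additional $\alpha$-dependence, that the mean bound (which retains a residual factor $1/\alpha$) is still driven below $\tfrac{\eps^2}{4}\|A\|_2^2$ by the stated lower bound on $d$ under the constraint on $\eps$, and that the Rudelson's-lemma step is genuinely insensitive to the sampling law (so that $\log d$, not $\log n$, appears). No new idea beyond \cite{Rudelson_2007} is needed; the ``stability'' content is exactly the rebalancing $\delta\mapsto\delta/\alpha^2$, paid back in the success probability and in the admissible range of $\eps$, which is what upgrades ``sample exactly from the row lengths'' to ``sample from anything dominating a fixed multiple of the row lengths.''
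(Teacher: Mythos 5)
Your proposal is correct and follows essentially the same route as the paper: rescale the sampled rows by the actual sampling probabilities so that unbiasedness $\E[\hat R^*\hat R]=A^*A$ holds for any distribution, observe that the hypothesis $\widetilde q_i\geq\alpha_i^2 q_i^{\textnormal{row}}$ enters only through the uniform bound $M\leq\|A\|_F/\alpha=\sqrt r/\alpha$ on the sampled vectors, and then run the Rudelson--Vershynin machinery to get the tail bound $2\exp(-c\alpha^2/\delta)$, with the condition $\eps<\delta^{-1/4}\sqrt{2\alpha}$ keeping the parameter $a$ in the admissible regime. The only cosmetic difference is that the paper invokes \cite[Theorem 3.1]{Rudelson_2007} as a black box rather than re-deriving the symmetrization/Khintchine chain, so your parameter bookkeeping matches theirs.
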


The proof of Theorem \ref{THM:Stability} requires a simple modification of the proof of the main theorem in \cite{Rudelson_2007}.  For completeness, we give the proof here; the first ingredient is the following.

\begin{theorem}[{\cite[Theorem 3.1]{Rudelson_2007}}]\label{THM:RVLargeNumbers}
Let $y$ be a random vector in $\K^n$ which is uniformly bounded almost everywhere, i.e. $\|y\|_2\leq M$.  Assume for normalization that $\|\E(y\otimes y)\|_2\leq1$.  Let $y_1,\dots,y_d$ be independent copies of $y$.  Let \[ a:= C_0\sqrt{\frac{\log d}{d}}M. \] Then
\begin{enumerate}
    \item[(i)] If $a<1$, then \[ \E\left\|\frac1d\sum_{i=1}^d y_i\otimes y_i-\E(y\otimes y) \right\|_2\leq a;\]
    \item[(ii)] For every $t\in(0,1)$, \[ \Prob\left\{\left\|\frac1d\sum_{i=1}^d y_i\otimes y_i-\E(y\otimes y)\right\|_2>t\right\} \leq 2\exp(-ct^2/a^2). \]
\end{enumerate}
\end{theorem}

Note that Theorem \ref{THM:RVLargeNumbers} was proved in \cite{Rudelson_2007} for $\K=\R$, but the proof is valid without change for complex vectors, which we need for our application \cite{RudelsonPrivate}.

\begin{proof}[Proof of Theorem \ref{THM:Stability}]
Without loss of generality, suppose that $\|A\|_2=1$.  Let $x_i$ be the rows of $A$ so that $A^*A = \sum_{i=1}^m x_i\otimes x_i$.  Define the random vector $y$ via
\[ \Prob\left(y = \frac{1}{\sqrt{\widetilde{p}_i}}x_i\right) = \widetilde{p}_i.\]
Note that by assumption on $\widetilde{p}$, $\Prob(y=x_i) = 0$ only if $x_i=0$.  Let $y_1,\dots,y_d$ be independent copies of $y$, and let $\hat{A}$ be the matrix whose rows are $\frac{1}{\sqrt{d}}y_i$. Then we have $\hat{A}^*\hat{A} = \frac1d\sum_{i=1}^d y_i\otimes y_i$, and  $\E(y\otimes y) = A^*A$; indeed
\[\E(y\otimes y) = \sum_{i=1}^m \frac{1}{\sqrt{\widetilde{p}_i}}x_i\otimes \frac{1}{\sqrt{\widetilde{p}_i}}x_i\widetilde{p}_i = \sum_{i=1}^m x_i\otimes x_i = A^*A.  \]
Now by assumption on $\widetilde{p}$, we may choose \[\|y\|_2 = \frac{\|x_i\|_2}{\alpha_i\|x_i\|_2}\|A\|_F \leq \frac1\alpha\|A\|_F = \frac{\sqrt{r}}{\alpha}=:M.\]  Applying Theorem \ref{THM:RVLargeNumbers} with the assumption on $d$ yields (as in \cite{Rudelson_2007})
\[ a = \frac{1}{\alpha}C\left(\frac{\log d}{d}r\right)^\frac12 \leq \frac{\eps^2\sqrt{\delta}}{2\alpha}.\]
This quantity is thus bounded by 1 provided $\frac{\eps^2\sqrt{\delta}}{2}<\alpha$. In this event, Theorem \ref{THM:RVLargeNumbers} $(ii)$ implies that if $t=\frac{\eps^2}{2}$, then \[\|A^*A-\hat{A}^*\hat{A}\|_2\leq\frac{\eps^2}{2} \]
with probability at least $1-2\exp(-\frac{c\alpha^2}{\delta})$.  Note also that by the assumption on $\eps,\delta$, and $\alpha$, we have $\frac{\alpha^2}{\delta}>\frac{\eps^4}{4}$, whence the given event holds with probability at least $1-2\exp(-c\eps^4)$.
\end{proof}

\begin{proof}[Proof of Theorem \ref{THM:StableCUR}]
The proof is the same as that of Theorem \ref{THM:ColRowChnM} \textit{mutatis mudandis}, where one applies Theorem \ref{THM:Stability} rather than Theorem \ref{THM_Rudelson} to conclude that $C$ and $R$ have rank $k$.
\end{proof}

\section*{Acknowledgements} K. H. is partially supported by the National Science Foundation TRIPODS program, grant number NSF CCF--1740858. 
LX.H is partially supported by NSF CAREER DMS 1348721 and NSF BIGDATA 1740325.
\bibliographystyle{plain}
\bibliography{HammHuang}

\end{document}